\documentclass[reqno,11pt]{amsart}
\usepackage{amssymb, amsmath,amsmath,latexsym,amssymb,amsfonts,amsbsy, amsthm,mathtools,graphicx,CJKutf8,CJKnumb,CJKulem,color,cases,empheq,extpfeil,bm}

\setlength{\oddsidemargin}{0mm}
\setlength{\evensidemargin}{0mm} \setlength{\topmargin}{0mm}
\setlength{\textheight}{220mm} \setlength{\textwidth}{155mm}

\renewcommand{\theequation}{\thesection.\arabic{equation}}
\numberwithin{equation}{section}

\allowdisplaybreaks

\newcommand{\cal }{\mathcal }

\newcommand{\dif}{\,\mathrm{d}}
\newcommand\supp{\mathrm{supp}}

\newtheorem{remark}{Remark}[section]
\newtheorem{lemma}{Lemma}[section]

\newtheorem{cor}{Corollary}[section]
\newtheorem{prop}{Proposition}[section]
\newtheorem{theorem}{Theorem}[section]

\begin{document}


\title[Global well-posedness of second-grade fluid equations in 2D exterior domain]
{Global well-posedness of second-grade fluid equations in 2D exterior domain}

\author[You]{Xiaoguang You}
\address{Xiaoguang You $\newline$ School of Mathematics, Northwest University, Xi'an 710069, P. R. China}
\email{wiliam$\_$you@aliyun.com}

\author[Zang]{Aibin Zang}
\address{Aibin Zang $\newline$ School of Mathematics and Computer Science \& The Center of Applied Mathematics, Yichun university, Yichun, Jiangxi 340000, P. R. China}
\email{zangab05@126.com}


\begin{abstract}
In this article, we consider the second-grade fluid equations in 2D exterior domain $\Omega$ with Dirichlet boundary conditions. For initial data $\boldsymbol{u}_0 \in \bm{H}^3(\Omega)$, the second-grade fluid equations is shown to be globally well-posed. Furthermore, for arbitrary $T > 0$ and $s \geq 3$, we prove that the solution belongs to $L^\infty([0, T]; \bm{H}^s(\Omega))$ provided that $\bm{u}_0$ is in $\bm{H}^s(\Omega)$.
\end{abstract}


\maketitle

\noindent {\sl Keywords\/}:  Second-grade fluid; global well-posendess; exterior domain

\vskip 0.2cm

\noindent {\sl Mathematics Subject Classification.}  35D30; 35G31; 76D03 \\

\renewcommand{\theequation}{\thesection.\arabic{equation}}
\setcounter{equation}{0}

\section{Introduction}
There are many models of non-Newtonian fluids which have recently attracted wide attention. The second-grade fluid model is a well-known subclass of them and is admissible for slow flow fluids such as industrial fluids, slurries and polymer melts, we refer \cite{dunn1974thermodynamics,fosdick1979anomalous} for a comprehensive theory of second-grade fluids.  In the classical incompressible fluids of second  grade, the Cauchy stress tensor $\sigma$ is given by
\begin{align}
\nonumber \sigma = -pI + 2\nu A_1 + \alpha_1 A_2 + \alpha_2 A_1^2,
\end{align}
where $\nu$ is the kinematic viscosity, p is the indeterminate part of the stress due to the constraint of incompressibility,  $\alpha_1$ and $\alpha_2$ are material moduli which are usually referred to as the normal stress coefficients. $A_1$, $A_2$ stand for the first two Rivlin-Ericksen tensors, which are
\begin{align}
\nonumber&A_1 = \frac{1}{2}(\nabla \bm{u} + \nabla \bm{u}^t),\\
\nonumber&A_2 = \frac{DA_1}{Dt} + (\nabla\bm{u})^t A_1 + A_1 (\nabla \bm{u}),
\end{align}
where
\begin{align}
\nonumber\frac{D}{Dt} = \partial_t + \bm{u} \cdot \nabla
\end{align}
is the material derivative. In \cite{dunn1974thermodynamics}, Dunn and Fosdick showed that the normal stress coefficients $\alpha_1,\alpha_2$ must verify the inequality
\begin{align}
\nonumber\alpha_1 + \alpha_2 = 0; \alpha_1 \geq 0
\end{align}
as restrictions imposed by thermodynamics and the assumption that the specific Helmholtz free energy of the fluid be minimum in equilibrium.

Set $\alpha := \sqrt{\alpha_1}$. The substitution of the above Cauchy stress tensor $\sigma$ into the equation of linear momentum yields the following equations
\begin{align}
    \nonumber \partial_t (\bm{\bm{u} - \alpha^2\Delta\bm{u}}) + \bm{u} \cdot {\nabla} (\bm{u} - \alpha^2 \Delta \bm{u}) + (\nabla \bm{u})^t \cdot (\bm{u} - \alpha^2\Delta\bm{u}) + \nabla p = \nu \Delta \bm{u}.
\end{align}
We can see that the above second-grade fluid equations are an interpolant between the Navier–Stokes equations and the Euler-$\alpha$ equations. In what follows, we only consider the case $\alpha > 0$.

The well-posedness problem of second-grade fluid equations has been well studied by various authors. The existence and uniqueness of solutions to second-grade fluid equations in bounded domain with Dirichlet boundary conditions, was proved by Cioranescu and Ouazar \cite{cioranescu1984existence}. In the two-dimensional case the solution is global in time, and local for small data in the three-dimensional case. Later, Cioranescu and Girault \cite{cioranescu1997weak} showed that the solution in the three-dimensional case is actually global for small data.  In \cite{busuioc1999second}, Busuioc studied the existence of solutions for whole space $\mathbb{R}^n$(n=2, 3). She proved that there exists a local strong solution provided the initial data is sufficient smooth, and the solution is global for two dimension. On the other hand, Bresch and Lemoine \cite{bresch1997existence} established the existence and uniqueness of $W^{2,r} (r > 3)$ stationary solution for three dimensional bounded domain of class $C^2$ with smallness restrictions on the kinematic viscosity $\nu$. Recently, the existence and uniqueness of strong solution in the torus $\mathbb{T}^2$ was studied in \cite{paicu2013dynamics,paicu2012regularity}. For further results concerning the second-grade fluid equations, we refer the readers to \cite{bresch2020existence,galdi1997slow,fosdick1980thermodynamics,galdi1994further,galdi1993existence,girault2007time,girault1999analysis,oliver2001vortex,shkoller2001smooth}.

Let $\mathcal{O} \subset \mathbb{R}^2$ be a bounded, simply connected domain with $C^\infty$ Jordan boundary $\Gamma$. Without loss of generality, we set $\alpha=1$, and consider the second-grade fluid equations in exterior domain $\Omega=\mathbb{R}^2 \setminus \overline{\mathcal{O}}$,
given by:
\begin{numcases}{}
    \partial_t \bm{v} + \bm{u} \cdot {\nabla} \bm{v} + (\nabla \bm{u})^t \cdot \bm{v} + \nabla p = \nu \Delta \bm{u}
      &  $\text{in}  \ \Omega \times (0, \infty), $ \label{second-grade-rewriten-1}   \\
    \text{div} \ \bm{u} = 0 &  $\text{in} \ \Omega \times [0, \infty) $ \label{second-grade-rewriten-2},\\
    \bm{u} = 0  & $\text{on} \ \Gamma \times [0, \infty] \label{second-grade-rewriten-3}$,\\
    \bm{u}|_{t=0} = \bm{u}_0 & $\text{in} \ \Omega $\label{second-grade-rewriten-4},\\
    \bm{u}(x, t) \to 0 & \text{as} $|x| \to \infty, t \in [0, \infty), $ \label{second-grade-rewriten-5}
\end{numcases}
where $\bm{v} = \bm{u} - \Delta \bm{u}$, and $\bm{u}_0$ is the initial velocity. Above $\bm{u}$ is called the \textit{filtered} velocity, while $\bm{v}$ is the \textit{unfiltered} velocity.

In this paper, we will study the global existence and uniqueness to the above equations. Solving this problem is not easy, the major difficult arises from the lack of 'good' a priori estimates on the derivatives of the solution, which is mainly due, on one hand, to the rather involved form of the nonlinearities and high order derivatives in ($\ref{second-grade-rewriten-1}$), and on the other hand, to unboundedness of the domain.
Our work here is inspired by the techniques in \cite{2109.00915} that used to solve the well-posedness problem of Euler-$\alpha$ equations. By taking curl of equation $(\ref{second-grade-rewriten-1})$, we consider instead the vorticity-stream formula
\begin{numcases}{}
     \partial_t q + \bm{u} \cdot \nabla{q}  + \nu q - \nu \nabla^{\perp}\cdot \bm{u}= 0 & $ \text{in } \,\Omega \times [0,\infty]$, \label{vstream-1}\\
     \Delta_x {\psi}(x, t) = {q}(x, t) & $ \text{in } \,\Omega \times [0, \infty]$, \label{vstream-2} \\
     {\bm{u}}(x, t) - \Delta{\bm{u}}(x, t) + \nabla p = \nabla^\perp \psi &  $\text{in } \Omega \times [0, \infty]$,  \label{vstream-3}
\end{numcases}
here q(i.e. $\nabla^\perp \cdot \bm{v}$) is called the \textit{unfiltered} vorticity, while $\psi$ is the stream function. We observe that each equation of \eqref{vstream-1}--\eqref{vstream-3} is linear and classical. In fact, equation \eqref{vstream-1} is the transport equation, and for fixed $t\in [0, \infty)$, equation \eqref{vstream-2} is the Poisson equation, while \eqref{vstream-3} is the stationary Stokes equation.  In view of the well known results of these equations, we could obtain a priori estimate of $\bm{u}$ to establish local well-posedness. However, to show the solution is global, it is indispensable to utilize the estimates that exploited from the origin equations \eqref{second-grade-rewriten-1}--\eqref{second-grade-rewriten-5}. This is mainly due to that, in the Poisson equation \eqref{vstream-2}, low order derivatives of $\psi$ could not be bounded by its high order ones in exterior domain.

All in all, the main feature of the proof is based on constructing a family of approximate equations and a limit process using compactness arguments in order to control the nonlinear terms. We want to remark that the key point in our proof lies on the property that \textit{unfiltered} vorticity $q$ of the approximate equations is compactly supported in arbitrary fixed time interval provided that the initial data is compactly supported.

The remainder of this paper is organized as follows. In section 2, we will introduce notations and construct the approximate equations, then we will present our main results. In section 3 we will give some preliminaries and one technical lemma. In section 4, we will prove the global existence and uniqueness of solution to the approximate equations. In section 5, we will prove global well-posedness of original equations \eqref{second-grade-rewriten-1}--\eqref{second-grade-rewriten-5}. In section 6, some discussions and comments are given.

\renewcommand{\theequation}{\thesection.\arabic{equation}}
\setcounter{equation}{0}

\section{Notations and results} \label{notation}
Throughout the paper, if we denote by $C$ a positive constant with neither any subscript nor superscript then $C$ is considered as a generic constant whose value can change from line to line in the inequalities and depends on the parameters in question. On the other hand, we will denote in a bold character vector valued functions and in the usual scalar functions.

We will use standard notation for the Lebesgue spaces $L^p(\Omega)$ with the norm $\|\cdot\|_{L^p(\Omega)}$.  We use the notation $H^s(\Omega)$ for the usual $L^2$-based Sobolev spaces of order $s$. $C_0^\infty(\Omega)$ represents the space of smooth functions with infinitely many derivatives, compactly supported in $\Omega$, and $H_0^s(\Omega)$ the closure of $C_0^\infty$ under the $H^s$-norm. For the sake of simplicity, $\bm{H}^s(\Omega)$(respectively $\bm{H}_0^s(\Omega)$) stands for vector space $(H^s(\Omega))^2$(respectively $(H^s_0(\Omega))^2$). Besides, we will use a few times of homogeneous Sobolev space, which is stated as
\begin{align*}
& \dot{\bm{H}}(\Omega) := \{ \bm{u} \in (L_{loc}^2(\Omega))^2; \int_\Omega|\nabla \bm{u}|^2\dif x<\infty\}.
\end{align*}

The vector space $\mathcal{D}$ is made up of divergence-free vector fields in $(C_0^\infty(\Omega))^2$. Similarly, the symbol $V$ denotes the subset of $\bm{H}_0^1(\Omega)$, in which vector fields are all divergence-free. Moreover, the function space $L^2_{\sigma}$ is defined as
\begin{align*}\label{2}
  & L^2_{\sigma} := \lbrace \textbf{u} \in (L^2(\Omega))^2; \ \text{div}\, \textbf{u} = 0, \textbf{u} \cdot \bm{\nu}|_{\Gamma} = 0 \rbrace,
\end{align*}
here $\bm{\nu}$ is the normal vector to $\Gamma$.

For a scale function $\psi$, we denote $(-\partial_2 \psi, \partial_1 \psi)$ as $\nabla^\perp \psi$, while for a vector field $\bm{u}$, we will use notation $\nabla^\perp \cdot \bm{u} := -\partial_2 \bm{u}_1 + \partial_1 \bm{u}_2$.

Let $L > 0$ be arbitrary, we set $\Omega_{L} := \Omega \cap B(0, L)$ and $\Omega^L := \Omega \setminus \overline{B(0, L)}$, where $B(0, L)$ is the disk centered at origin with radius $L$. By the way, the unit disk centered at origin in $\mathbb{R}^2$ is denoted by $D$. Let $\mathcal{A}$  be an arbitrary set of $\mathbb{R}^2$, $\delta(\mathcal{A})$ represents the $\textit{diameter}$ of $\mathcal{A}$, that is
\begin{align}
\nonumber \delta(\mathcal{A}) := \sup_{x, y \in \mathcal{A}} |x - y|.
\end{align}

In the sequel of this section, firstly, we will elaborate why we need to construct approximate equations for \eqref{second-grade-rewriten-1}--\eqref{second-grade-rewriten-5}, then we will state our main results.

Observe that equation \eqref{vstream-1} of the vorticity stream formula is a linear transport equation provided $\bm{u}$ is a known flow velocity field. To apply the method used in \cite{2109.00915}, we need the property that $q$ is compactly supported when the initial data $q_0$ is compactly supported. However, this does not holds for equation \eqref{vstream-1}. In fact, from \eqref{vstream-1}, we formally have that
\begin{equation*}
{q}(\bm{X}(t, \alpha), t) = q_0(\alpha) - \nu \int_0^t q(\bm{X}(\alpha,s), s)\dif s + \nu \int_0^t \nabla^\perp\cdot( \bm{u}(\bm{X}(\alpha, s),s)) \dif s,
\end{equation*}
where $X(\cdot, t): \alpha\in\Omega \rightarrow X(\alpha, t)\in \Omega$ is the particle-trajectory mapping corresponding to flow velocity $\bm{u}$.
Observing that $\bm{u}$ is not compactly supported in general, it follows that $q(\cdot, t)$ would not be compactly supported for $t > 0$. And that is why we need to construct approximate equations for the second-grade fluid equations \eqref{second-grade-rewriten-1}--\eqref{second-grade-rewriten-5}. Let $\vartheta \in C_0^\infty(\mathbb{R}^2)$ such that $0 \leqslant \vartheta \leqslant 1$ in $\mathbb{R}^2$, $\vartheta(x) \equiv 1$ for $|x| < \frac{1}{2}$ and $\vartheta(x) \equiv 0$ for $|x| > 1$. Set $\vartheta_n(x) := \vartheta(\frac{x}{n})$ for $x \in \mathbb{R}^2$ with $n \in \mathbb{Z}^+$. The approximate equations are defined as:
\begin{numcases}{}
  \partial_t \bm{v^n} + \bm{u^n} \cdot {\nabla} \bm{v^n} + (\nabla \bm{u^n})^t \cdot \bm{v^n} + \nu \bm{v^n} - \nu \vartheta_{n} \bm{u^n} + \nabla p = 0   &  $\text{in}  \ \Omega \times (0, T), $ \label{second-approximate-1}  \qquad \    \\
    \text{div} \ \bm{u^n} = 0 &  $\text{in} \ \Omega \times [0, T) $ \label{second-approximate-2},\\
    \bm{u^n} = 0  & $\text{on} \ \Gamma \times [0, T) \label{second-approximate-3}$,\\
    \bm{u^n}|_{t=0} = \bm{u}_0^n & $\text{in} \ \Omega $\label{second-approximate-4},\\
    \bm{u^n} (x, t) \to 0 & $\forall t \in [0, T), |x| \to \infty $ \label{second-approximate-5},
\end{numcases}
where $\bm{v}^n = \bm{u}^n - \Delta \bm{u}^n$, and $\bm{u}^n_0$ is initial data. For the above initial boundary value problem, we have the following proposition.
\begin{prop} \label{proposition-1} Let $T > 0$ be fixed. Suppose the initial velocity $\bm{u}_0^n \in \bm{H}^s(\Omega) \cap V$($s \geq 3$) is compactly supported. Then the approximate equations $(\ref{second-approximate-1})$--$(\ref{second-approximate-5})$ has a unique weak solution $\textbf{u} \in L^\infty([0, T);\bm{H}^s(\Omega)\cap V) \cap C([0, T]; \bm{H}^3(\Omega))$ in the following sense: for any $\varphi \in C_0^\infty([0, T); \mathcal{D})$, the identity
\begin{equation}\label{energy-formula-approximate}
\begin{aligned}
   \int_0^T(\bm{v^n}, \partial_s\varphi)_{L^2(\Omega)} \dif s  &= (\bm{u}^n_0, \varphi(\cdot, 0))_{L^2(\Omega)} + (\nabla \bm{u}^n_0, \nabla \varphi(\cdot, 0))_{L^2(\Omega)}  \\
&\quad+   \nu\int_0^T ((1-\vartheta_n)\bm{u}^n, \varphi)_{L^2(\Omega)^2} \dif s + \nu \int_0^T (\nabla \bm{u}^n, \nabla \varphi)_{L^2(\Omega)} \dif s\\
&\quad+ \int_0^T (\bm{u}^n\cdot \nabla \bm{v}^n + (\nabla \bm{u}^n)^t \cdot \bm{v}^n, \varphi)_{L^2(\Omega)} \dif s
\end{aligned}
\end{equation}
holds.
\end{prop}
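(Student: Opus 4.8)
The plan is to solve the approximate system by passing to the vorticity--stream formulation of the introduction, iterating over the three decoupled \emph{linear} subproblems, and then closing a priori estimates that exploit the compact support of the unfiltered vorticity. Testing \eqref{second-approximate-1} against a divergence-free $\varphi$ eliminates the pressure, and integrating by parts once in time (using $\varphi(\cdot,T)=0$) and once in space (using $\bm{u}^n|_\Gamma=0$) produces exactly the weak identity \eqref{energy-formula-approximate}; so it suffices to construct a sufficiently regular $\bm{u}^n$ solving the strong form with the stated bounds. I would set $q^n:=\nabla^\perp\cdot\bm{v}^n$ and note that applying $\nabla^\perp\cdot$ to \eqref{second-approximate-1}, via the two-dimensional identity already used in \eqref{vstream-1}, collapses the two nonlinear terms into the single transport term $\bm{u}^n\cdot\nabla q^n$, yielding $\partial_t q^n + \bm{u}^n\cdot\nabla q^n + \nu q^n - \nu\nabla^\perp\cdot(\vartheta_n\bm{u}^n)=0$, coupled to the Poisson equation $\Delta\psi^n=q^n$ and to the stationary Stokes problem recovering $\bm{u}^n$ from $\nabla^\perp\psi^n$.

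The construction proceeds by a Picard-type iteration: given $\bm{u}^{(k)}$, which is Lipschitz in $x$ since $s\ge 3$ gives $\bm{H}^s\hookrightarrow C^1$ in two dimensions, solve the linear transport equation for $q^{(k+1)}$ by the method of characteristics, then recover $\bm{u}^{(k+1)}$ by solving in turn the Poisson equation and the Stokes system, invoking the classical linear theory for each. The key structural fact, and the reason the cutoff was introduced, is that $q^{(k+1)}$ stays compactly supported on $[0,T]$: the characteristics displace points only a bounded distance because the transporting velocity is bounded, the damping $\nu q^{(k+1)}$ does not enlarge the support, and the forcing $\nu\nabla^\perp\cdot(\vartheta_n\bm{u}^{(k)})$ is supported in $\overline{B(0,n)}$. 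I would show that the iteration map is a contraction in $C([0,T^*];L^2(\Omega))$ at the vorticity level on a short interval $T^*$, yielding a local-in-time solution, while tracking the support radius $R_T$.

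The heart of the matter is the pair of a priori estimates that globalize the solution on $[0,T]$ and supply the $\bm{H}^s$ bound. First, an energy estimate obtained by pairing \eqref{second-approximate-1} with $\bm{u}^n$: the trilinear term $(\bm{u}^n\cdot\nabla\bm{v}^n + (\nabla\bm{u}^n)^t\cdot\bm{v}^n,\,\bm{u}^n)_{L^2}$ vanishes by the antisymmetry of the second-grade nonlinearity for divergence-free fields vanishing on $\Gamma$, leaving $\tfrac12\frac{d}{dt}\|\bm{u}^n\|_{H^1}^2 + \nu\|\bm{u}^n\|_{H^1}^2 = \nu(\vartheta_n\bm{u}^n,\bm{u}^n)_{L^2(\Omega)}$, which by Gronwall bounds $\|\bm{u}^n\|_{H^1}$ on $[0,T]$. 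Second, an $L^2$ estimate on $q^n$: multiplying the vorticity equation by $q^n$ annihilates the transport term, while the forcing is controlled by $C_n\|\bm{u}^n\|_{H^1}\|q^n\|_{L^2}$, so $\|q^n(t)\|_{L^2}$ stays bounded on $[0,T]$. Elliptic regularity for the Poisson and Stokes problems upgrades this to a bound on $\|\bm{u}^n\|_{H^3}$, and the general $\bm{H}^s$ bound follows by commuting $\partial^\alpha$ ($|\alpha|\le s-2$) through the transport equation, estimating the commutator by $\|\bm{u}^n\|_{H^{s-1}}\|q^n\|_{H^{s-2}}$ and bootstrapping. As the introduction stresses, it is essential that the low-order ($H^1$) norm be controlled separately from the momentum equation rather than deduced from the top-order vorticity norm, which is impossible on the unbounded domain; this split is exactly what the cutoff $\vartheta_n$ makes possible.

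Uniqueness then follows from a difference estimate: subtracting the equations for two solutions and pairing with the difference in the $H^1$ inner product, the nonlinear terms are absorbed by the same antisymmetry together with the Lipschitz velocity bound, and Gronwall forces the difference to vanish. Weak-$*$ compactness in $L^\infty([0,T];\bm{H}^s\cap V)$ combined with an Aubin--Lions argument, made effective by the uniform compact support of $q^n$, which restores via Rellich on a fixed ball the compactness otherwise lost on the exterior domain, lets me pass to the limit in the iteration and verify \eqref{energy-formula-approximate}; the continuity $\bm{u}^n\in C([0,T];\bm{H}^3(\Omega))$ is obtained in the standard way from the regularity and the equation. I expect the main obstacle to be precisely the reconciliation of the two a priori estimates in the exterior domain: deriving the $\bm{H}^s$ control from the compactly supported vorticity while independently securing the low-order energy bound, and verifying that the support of $q^n$ remains controlled uniformly on $[0,T]$ throughout the iteration.
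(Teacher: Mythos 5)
Your overall architecture is essentially the paper's: reformulate via the vorticity--stream system, solve the three linear subproblems in sequence, track the compact support of the unfiltered vorticity (Lemma \ref{lemma-transport}(ii)) so that the $R$-dependent Poisson bounds of Lemma \ref{lemma-poisson} apply, get the low-order $\bm{H}^1$ bound separately from the momentum equation, close the $\bm{H}^3$ bound by Gr\"onwall, and obtain the $\bm{H}^s$ bound by commuting derivatives through the transport equation (this is exactly Remark \ref{remark-propostion-1}). However, there is one genuine gap: you propose to close the fixed-point argument as a contraction in $C([0,T^*];L^2(\Omega))$ \emph{at the vorticity level}. At the minimal regularity $s=3$ this cannot work, because the difference estimate loses a derivative. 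Writing $\delta q = q^{(k+1)}-q^{(k)}$ and $\delta\bm{u}=\bm{u}^{(k)}-\bm{u}^{(k-1)}$, the difference of the two transport equations contains the term $\delta\bm{u}\cdot\nabla q^{(k)}$; pairing it with $\delta q$ in $L^2$ requires control of $\nabla q^{(k)}$, i.e. of \emph{four} derivatives of the velocity iterates, one more than $\bm{H}^3$ provides, and integration by parts only shifts the derivative onto $\delta q$, which is equally unavailable. The paper avoids precisely this loss by un-curling the equation: it first shows (via Corollary \ref{remark-harmonic}, which in turn needs Lemma \ref{lemma-harmonic} because the exterior domain is not simply connected) that $\tilde{\bm{u}}=\mathcal{F}[\bm{u}]$ satisfies the velocity-form equation \eqref{pf-euler-alpha-1} with some pressure, and then runs the contraction estimate on the velocity difference $S=\tilde{\bm{u}}^1-\tilde{\bm{u}}^2$ in the $\bm{H}^1$ norm, where the dangerous terms ($K_1$, $K_2$ in \eqref{pf-K}) are tamed by integration by parts and never require more than the $\bm{H}^3$ bound $M$ on the iterates.

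The same un-curling step is also missing from your reduction to the weak identity. You argue that it suffices to produce a regular solution of ``the strong form,'' but your iteration only produces a solution of the curled system \eqref{construction-equation-1}--\eqref{construction-equation-6}; to conclude that \eqref{second-approximate-1} itself holds (equivalently, that the divergence-free, curl-free residual $\bm{Q}(t)$ is a gradient $\nabla\tilde{P}$, so that a pressure exists) you must invoke the Helmholtz--Weyl-type result of Kozono et al.\ in the exterior domain; this is not automatic since $\Omega$ is not simply connected. Everything else in your plan --- the cancellation giving the $\bm{H}^1$ energy identity, the $L^p$ and support estimates for $q^n$, the $R$-linear Poisson bounds feeding the Gr\"onwall closure, and the bootstrap for $s\geq 4$ (where your index should be $|\alpha|\leq s-3$, since $q^n\sim D^3\bm{u}^n$) --- is correct and matches the paper. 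The fix is local: keep your iteration, but measure the difference of iterates in $\bm{H}^1$ of the velocity after un-curling, as in the paper's Step 3.
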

\begin{remark} The cut-function $\vartheta_n$ in the approximate equation \eqref{second-approximate-1}  helps to confine the support of \textit{unfiltered} vorticity $q^n$($\nabla^\perp\cdot \nu^n$) to a compact subset of $\Omega$, which will be verified in Lemma $\ref{lemma-transport}$. It is this property that we can show local well-posedness of the approximate equations.
\end{remark}

With proposition \ref{proposition-1} at hand, using compactness arguments, we may establish the following main theorem.
\begin{theorem}\label{theorem-1} For arbitrarily fixed $T > 0$. Suppose the initial data $\bm{u}_0 \in \bm{H}^s(\Omega) \cap V$($s \geq 3$), then the second-grade fluid equations $(\ref{second-grade-rewriten-1})$--$(\ref{second-grade-rewriten-5})$ has a unique weak solution $\bm{u} \in L^\infty([0, T); \bm{H}^s(\Omega)) \cap C([0, T); V)$ in the following sense: for any $\varphi \in C_0^\infty([0, T); \mathcal{D})$, the identity
\begin{equation}\label{energy-formula}
\begin{aligned}
   \int_0^T(\bm{v}, \partial_s\varphi)_{L^2(\Omega)} \dif s  &= (\bm{u}_0, \varphi(\cdot, 0))_{L^2(\Omega)} + (\nabla \bm{u}_0, \nabla \varphi(\cdot, 0))_{L^2(\Omega)}  \\
&\quad + \nu \int_0^T (\nabla \bm{u}, \nabla \varphi)_{L^2(\Omega)} \dif s + \int_0^T (\bm{u}\cdot \nabla \bm{v} + (\nabla \bm{u})^t \cdot \bm{v}, \varphi)_{L^2(\Omega)} \dif s
\end{aligned}
\end{equation}
holds. Moreover,
\begin{align}\label{theorem-bound}
\sup_{t\in[0, T]}\|\bm{u}(t)\|_{\bm{H}^s(\Omega)} \leqslant C(T, \|\bm{u}_0\|_{\bm{H}^3(\Omega)}) \|\bm{u}_0\|_{\bm{H}^s(\Omega)},
\end{align}
where $C(T, \|\bm{u}_0\|_{\bm{H}^3(\Omega)}) \equiv C$ is a constant depends on $T$ and $\|\bm{u}_0\|_{\bm{H}^3(\Omega)}$ for $s > 3$, while only depends on $T$ for $s = 3$.
\end{theorem}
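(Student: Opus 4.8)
The plan is to realize the solution as a limit of the approximate solutions furnished by Proposition~\ref{proposition-1}. First I would regularize the datum: given $\bm{u}_0\in\bm{H}^s(\Omega)\cap V$, I construct a sequence $\bm{u}_0^n\in\bm{H}^s(\Omega)\cap V$ of \emph{compactly supported}, divergence-free fields with $\bm{u}_0^n\to\bm{u}_0$ in $\bm{H}^s(\Omega)$. The naive truncation $\vartheta_n\bm{u}_0$ is compactly supported but no longer solenoidal, so I would restore the divergence-free and boundary conditions by a Bogovski\u{\i}-type correction on the annulus $\{n/2<|x|<n\}$, whose norm is controlled uniformly in $n$. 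Proposition~\ref{proposition-1} then produces solutions $\bm{u}^n\in L^\infty([0,T);\bm{H}^s\cap V)\cap C([0,T];\bm{H}^3)$ of \eqref{second-approximate-1}--\eqref{second-approximate-5}, each satisfying \eqref{energy-formula-approximate}.

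The heart of the matter is a bound on $\bm{u}^n$ in $L^\infty([0,T);\bm{H}^s)$ that is \emph{uniform} in $n$, and I would obtain it by coupling a low-order and a high-order estimate. At low order, testing the approximate momentum equation against $\bm{u}^n$ and using the algebraic cancellation that makes $(\bm{u}^n\cdot\nabla\bm{v}^n,\bm{u}^n)_{L^2}+((\nabla\bm{u}^n)^t\cdot\bm{v}^n,\bm{u}^n)_{L^2}$ vanish gives the dissipative identity
\[
\f{d}{dt}\|\bm{u}^n\|_{\bm{H}^1(\Omega)}^2+2\nu\|\nabla\bm{u}^n\|_{L^2(\Omega)}^2+2\nu\big((1-\vartheta_n)\bm{u}^n,\bm{u}^n\big)_{L^2(\Omega)}=0 ,
\]
in which the last term is nonnegative because $0\le 1-\vartheta_n\le 1$; hence $\|\bm{u}^n\|_{\bm{H}^1}\le\|\bm{u}_0^n\|_{\bm{H}^1}$ uniformly. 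At high order I would work with the \textit{unfiltered} vorticity $q^n=\nabla^\perp\cdot\bm{v}^n$, which by Lemma~\ref{lemma-transport} remains compactly supported and satisfies a damped transport equation with source $\nu\nabla^\perp\cdot(\vartheta_n\bm{u}^n)$. Propagating the $\bm{H}^{s-2}$ norm of $q^n$ along the flow, with commutators controlled by $\|\bm{u}^n\|_{W^{1,\infty}}\lesssim\|\bm{u}^n\|_{\bm{H}^3}$ (2D Sobolev embedding), and then recovering $\bm{u}^n$ from $q^n$ through the Poisson and stationary Stokes structure of \eqref{vstream-2}--\eqref{vstream-3} plus the elliptic regularity of $\bm{u}^n-\Delta\bm{u}^n=\bm{v}^n$, yields the full $\bm{H}^s$ bound once the low-order $\bm{H}^1$ control anchors the non-decaying modes. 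This coupling is the main obstacle: in the exterior domain the Poisson/Stokes estimates dominate only the top-order derivatives, so the energy bound from the original formulation is indispensable, and every constant must be tracked to be $n$-independent and to exhibit the stated dependence --- coefficient depending on $T$ alone when $s=3$ and on $(T,\|\bm{u}_0\|_{\bm{H}^3})$ when $s>3$.

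With these uniform bounds I pass to the limit. The equation supplies a uniform bound on $\partial_t\bm{v}^n$ in a space of negative order, so by Aubin--Lions a subsequence satisfies $\bm{u}^n\rightharpoonup\bm{u}$ weakly-$*$ in $L^\infty([0,T);\bm{H}^s)$ and strongly in $C([0,T];\bm{H}^{s'}_{\mathrm{loc}})$ for $s'<s$. Since every $\varphi\in C_0^\infty([0,T);\mathcal{D})$ is compactly supported in space, $\nu\int_0^T((1-\vartheta_n)\bm{u}^n,\varphi)_{L^2}\dif s\to 0$, while the strong local convergence handles the nonlinear terms; thus \eqref{energy-formula-approximate} converges to \eqref{energy-formula}, and $\bm{u}_0^n\to\bm{u}_0$ fixes the initial data terms. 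Weak lower semicontinuity of the $\bm{H}^s$ norm then yields \eqref{theorem-bound}, and $\bm{u}\in C([0,T);V)$ follows from the uniform bounds together with the time-continuity of $\bm{v}$.

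Finally, for uniqueness I would take two solutions $\bm{u}_1,\bm{u}_2$ with the same datum, set $\bm{w}=\bm{u}_1-\bm{u}_2$, and run the $\bm{H}^1$ energy estimate on the equation solved by $\bm{w}$. Using the same cancellation as above and the available regularity $\bm{u}_i\in\bm{H}^3(\Omega)\hookrightarrow W^{1,\infty}(\Omega)$ and $\bm{v}_i\in\bm{H}^1(\Omega)$, the remaining nonlinear differences are estimated by $C(\|\bm{u}_1\|_{\bm{H}^3},\|\bm{u}_2\|_{\bm{H}^3})\|\bm{w}\|_{\bm{H}^1}^2$, so that $\f{d}{dt}\|\bm{w}\|_{\bm{H}^1}^2\le C\|\bm{w}\|_{\bm{H}^1}^2$ and Gronwall's inequality forces $\bm{w}\equiv 0$.
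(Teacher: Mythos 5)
Your proposal follows, at its core, the same route as the paper: approximate the datum by compactly supported solenoidal fields, solve \eqref{second-approximate-1}--\eqref{second-approximate-5} via Proposition~\ref{proposition-1}, obtain $n$-independent bounds by coupling the $\bm{H}^1$ energy identity of the original formulation with the transport estimate for the unfiltered vorticity and the Poisson/Stokes elliptic recovery, and pass to the limit by compactness, using the compact support of $\varphi$ to kill the $\nu((1-\vartheta_n)\bm{u}^n,\varphi)$ term. The differences are worth recording. First, you rebuild the data approximation by truncation plus a Bogovski\u{\i} correction; this does work (the flux of $\bm{u}_0$ through $\{|x|=n/2\}$ vanishes since $\bm{u}_0\in V$, so the compatibility condition for the Bogovski\u{\i} problem holds), but the paper simply invokes Lemma~\ref{approximate-lemma}, which already supplies exactly this sequence. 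Second, you use Aubin--Lions on $\partial_t\bm{v}^n$ where the paper uses Rellich--Kondrachov, equicontinuity from the energy identity, and Arzel\`a--Ascoli; both are standard, though with your route you must first remove the pressure (e.g.\ by the Leray projection) to bound $\partial_t\bm{v}^n$ in a negative-order space. Third, and most notably, you give an explicit uniqueness argument by a Gr\"onwall estimate on the difference of two solutions; the paper's Section~5 proof never actually carries out a uniqueness step (it only constructs a solution), so this is a genuine addition, and it goes through by the same integration-by-parts cancellations used in Step~3 of the proof of Proposition~\ref{proposition-1}. One indexing correction is needed in your high-order step: the elliptic recovery gains \emph{three} derivatives, $\|D^{k+3}\bm{u}^n\|_{L^2}\lesssim \|q^n\|_{H^k}+\|\bm{u}^n\|_{L^2}$ (cf.\ \eqref{high-estimate-u}), so to bound $\bm{u}^n$ in $\bm{H}^s$ you should propagate $q^n$ in $H^{s-3}$, not $H^{s-2}$; as written, propagating $q^n$ in $H^{s-2}$ already requires control of $\bm{u}^n$ in $\bm{H}^s$ (this is the hypothesis of \eqref{high-transport-estimate}), which is circular at top order, and for $s>3$ the commutators are not controlled by $\|\bm{u}^n\|_{W^{1,\infty}}$ alone but need Kato--Ponce type bounds involving $\|\bm{u}^n\|_{\bm{H}^{k+2}}$. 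The correct induction, as in the paper, is: the $\bm{H}^1$ energy bound and the $L^2$ estimate of $q^n$ give $\bm{H}^3$ with a constant depending only on $T$, and then $q^n\in H^{k}$ (requiring $\bm{u}^n\in \bm{H}^{k+2}$ from the previous step) yields $\bm{u}^n\in \bm{H}^{k+3}$, which also produces the stated dependence of the constant on $(T,\|\bm{u}_0\|_{\bm{H}^3})$ for $s>3$.
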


\section{Preliminaries and technical lemmas}

In this section, we will give some preliminary results and a few technical lemmas. We observe that the initial velocity $\bm{u}_0^n$ of the approximate equations \eqref{second-approximate-1}--\eqref{second-approximate-5} should be compactly supported and converges to $\bm{u}_0$ in $\bm{H}^s(\Omega)$($s \geq 3$) as $n \rightarrow \infty$. Constructing such a approximate sequence for $\bm{u}_0$ is non-trivial, because the approximate vector $\bm{u}^n_0$ should be solenoidal and vanishes on boundary $\Gamma$. Fortunately, we have established the following result in our previous work \cite{2109.00915}.
\begin{lemma}\label{approximate-lemma}Let $s \in \mathbb{N}^+$ be fixed.  Suppose that $\bm{u}_0 \in V \cap \bm{H}^s(\Omega)$, then there exists a family of approximations $\{\bm{u}^n_0\} \subset V \cap \bm{H}^s(\Omega)$ such that $\bm{u}^n_0$ is compactly supported and converges to $\bm{u}_0$ in $\bm{H}^s(\Omega)$ strongly as $n\rightarrow \infty$.
\end{lemma}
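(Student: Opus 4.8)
The plan is to descend to the scalar level through a stream function and then truncate \emph{that} function with the cut-offs $\vartheta_n$, while subtracting a carefully chosen constant on each truncation annulus. First I would exploit that $\Omega = \mathbb{R}^2 \setminus \overline{\mathcal{O}}$ is the complement of a simply connected obstacle, so it is connected with a single hole, and that $\bm{u}_0 \in V$ vanishes on $\Gamma$, whence the flux $\int_\Gamma \bm{u}_0 \cdot \bm{\nu}\dif\sigma = 0$. This zero-flux condition guarantees a single-valued scalar stream function $\psi$ with $\bm{u}_0 = \nabla^\perp \psi$. Since $\nabla\psi$ equals $\bm{u}_0$ up to a fixed rotation, $\nabla\psi \in \bm{H}^s(\Omega)$ and hence $\psi \in H^{s+1}_{loc}(\overline{\Omega})$, with the componentwise identity $\|\nabla^{k}\psi\|_{L^2(A)} = \|\nabla^{k-1}\bm{u}_0\|_{L^2(A)}$ for $k \geq 1$ on any set $A$. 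Because $\nabla\psi = 0$ on the connected curve $\Gamma$, the function $\psi$ is constant there, and I normalize it so that $\psi \equiv 0$ on $\Gamma$.

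Writing $A_n := \{\,x : n/2 < |x| < n\,\}$ for the annulus supporting $\nabla\vartheta_n$, I would then set
\[
\bm{u}^n_0 := \nabla^\perp\big(\vartheta_n (\psi - c_n)\big), \qquad c_n := \frac{1}{|A_n|}\int_{A_n} \psi\dif x .
\]
Expanding by the Leibniz rule gives $\bm{u}^n_0 = \vartheta_n \bm{u}_0 + (\psi - c_n)\nabla^\perp\vartheta_n$. This field is divergence-free (it is a perpendicular gradient), compactly supported in $\overline{B(0,n)}\cap\Omega$, and for $n$ large enough that $\vartheta_n \equiv 1$ on a neighborhood of $\mathcal{O}$ it agrees with $\bm{u}_0$ near $\Gamma$ and therefore vanishes there; thus $\bm{u}^n_0 \in V \cap \bm{H}^s(\Omega)$ is compactly supported, as required. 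The constant $c_n$ disappears under $\nabla^\perp$ near the boundary, so it affects none of the structural properties and serves purely to sharpen the convergence estimate.

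It remains to show $\bm{u}^n_0 \to \bm{u}_0$ in $\bm{H}^s(\Omega)$, and I would estimate the two pieces of $\bm{u}^n_0 - \bm{u}_0 = (\vartheta_n - 1)\bm{u}_0 + (\psi - c_n)\nabla^\perp\vartheta_n$ separately. Differentiating up to order $s$ and distributing derivatives via Leibniz, the first piece contributes one term with no derivative on $\vartheta_n$, which tends to $0$ by dominated convergence, and terms carrying $\nabla^j\vartheta_n$ $(j \geq 1)$, each bounded by $Cn^{-j}\|\nabla^{s-j}\bm{u}_0\|_{L^2(A_n)}$ and hence vanishing as the $L^2$-tail of an $\bm{H}^s$ function. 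For the second piece the terms in which at least one derivative falls on $\psi$ are of the form $\nabla^{k}\psi\,\nabla^{m}\vartheta_n$ with $k \geq 1$, $m \geq 1$, and are controlled by $Cn^{-m}\|\nabla^{k-1}\bm{u}_0\|_{L^2(A_n)} \to 0$, again by the tail estimate.

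The one genuinely delicate contribution is the term in which all derivatives fall on the cut-off, namely $(\psi - c_n)\nabla^{s+1}\vartheta_n$, because $\psi$ itself need not decay and may in fact grow logarithmically at infinity (the two-dimensional borderline for $\dot{H}^1$). This is exactly what the subtraction of $c_n$ is designed to neutralize: since $A_n$ is the dilate by $n$ of the fixed Lipschitz annulus $A_1$, the Poincaré constant scales linearly, giving $\|\psi - c_n\|_{L^2(A_n)} \leq Cn\|\nabla\psi\|_{L^2(A_n)} = Cn\|\bm{u}_0\|_{L^2(A_n)}$ with $C$ independent of $n$. Combined with $|\nabla^{s+1}\vartheta_n| \leq Cn^{-(s+1)}$ this yields
\[
\big\|(\psi - c_n)\nabla^{s+1}\vartheta_n\big\|_{L^2} \leq Cn^{-(s+1)}\cdot n\,\|\bm{u}_0\|_{L^2(A_n)} = Cn^{-s}\|\bm{u}_0\|_{L^2(A_n)} \longrightarrow 0,
\]
and the first-order case $(\psi - c_n)\nabla^\perp\vartheta_n$ is bounded by $C\|\bm{u}_0\|_{L^2(A_n)} \to 0$. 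I expect this Poincaré-with-correct-scaling step — together with checking that the annulus Poincaré constant does not degenerate as $n \to \infty$ — to be the main obstacle; everything else reduces to routine Leibniz bookkeeping and the dominated-convergence and tail arguments above.
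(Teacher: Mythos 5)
Your proposal is correct. Note that this paper does not actually prove Lemma~\ref{approximate-lemma}: it is quoted from the authors' earlier work \cite{2109.00915}, so a line-by-line comparison is not possible here; but your argument --- pass to a single-valued stream function $\psi$ (justified by the zero flux through $\Gamma$, which follows from $\bm{u}_0 \in V$ and $\mathrm{div}\,\bm{u}_0=0$), truncate $\vartheta_n(\psi-c_n)$ with the mean $c_n$ over the annulus $A_n$, and control the dangerous term $(\psi-c_n)\nabla^{m}\vartheta_n$ by the Poincar\'e--Wirtinger inequality whose constant scales like $n$ under dilation --- is the standard route for this kind of approximation in 2D exterior domains and is in line with what the cited reference does. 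All the essential points are in place: the zero-circulation argument for single-valuedness, the fact that $\vartheta_n\equiv 1$ near $\Gamma$ so that $\bm{u}_0^n=\bm{u}_0$ there (hence $\bm{u}_0^n\in V$), and the scaling $\|\psi-c_n\|_{L^2(A_n)}\leqslant Cn\|\bm{u}_0\|_{L^2(A_n)}$ which beats the $n^{-(m)}$ decay of the cut-off derivatives. Two cosmetic remarks: the normalization $\psi\equiv 0$ on $\Gamma$ is superfluous, since $\psi-c_n$ is invariant under adding constants to $\psi$; and strictly speaking $\nabla\psi\in L^2$ rules out genuine logarithmic growth of $\psi$ (only slower unbounded growth, e.g.\ iterated logarithms, can occur), though this imprecision is harmless because your Poincar\'e argument never uses a pointwise bound on $\psi$.
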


As we have mentioned that each equation in the voriticity-stream formula \eqref{vstream-1}--\eqref{vstream-3} is linear and classic. Next, the existence, uniqueness and estimates of solution to these questions will be presented in detail. Since the equation \eqref{vstream-1} does not holds the special property that stated in section 2, we consider instead the approximate one
\begin{numcases}{}
    \partial_t q + \textbf{u} \cdot \nabla{q} + \nu q -\nu \nabla^\perp \cdot (\vartheta \bm{u})= 0, \label{transport-1}\\
    q|_{t=0} = q_0 \label{transport-2}.
\end{numcases}
where $q_0$ is the initial \textit{unfiltered} vorticity. For the above transport equations, we have the following result.
\begin{lemma}\label{lemma-transport}{}
Let $T > 0$ be fixed. Suppose $q_0 \in L^2(\Omega), \textbf{u}\in L^\infty([0, T]; V)$. Let $\vartheta \in C_0^\infty(\mathbb{R}^2)$ such that
$ 0 \leqslant \vartheta \leqslant 1$ in $\mathbb{R}^2$. Then the transport equations \eqref{transport-1}--\eqref{transport-2} have a unique weak solution  q $\in C([0, T]; L^2(\Omega))$. Moreover,
\begin{itemize}
\item[(i)] For $p =1, 2$, the following estimate holds:
\begin{equation}\label{transport-3}
     \begin{split}
        \|q(t)\|_{L^p({\Omega})}^p \leqslant \|q_0\|_{L^p({\Omega})}^p + C\nu\int_0^{t}\|\bm{u}(\cdot, s)\|_{\bm{H}^1(\Omega)}^p
     \end{split}
\end{equation}
for $t \in [0, T]$, where $C$ is a constant depends on $\Omega$ and the support of $\vartheta$ for $p = 1$, while depends only on $\Omega$ for $p=2$.
\item[(ii)]Suppose additionally that $\delta(\supp \, \vartheta) < R$ and $\delta(\supp \, q_0) < R$ for some $R > 0$, and $\bm{u} \in L^\infty([0, T]; \bm{H}^2(\Omega))$, then we have for all $t \in [0, T]$ that
\begin{equation} \label{remark-equation-1}
  \delta( \supp \, q(t)) \leqslant R + C\int_0^t \|\bm{u}(\cdot, s)\|_{\bm{H}^2(\Omega)} \dif s,
\end{equation}
where $C$ is a constant depends only on $\Omega$.
\item[(iii)] Suppose additionally that $q_0 \in \bm{H}^{s}(\Omega)$ and $\bm{u} \in L^\infty([0, T]; \bm{H}^{s+2}(\Omega))$, where $s \geq 1$, then it follows
\begin{equation} \label{high-transport-estimate}
\|q\|_{{H}^{s}(\Omega)} \leqslant C\left(\|q_0\|_{H^s(\Omega)} + \nu^\frac{1}{2}T^\frac{1}{2} \|\bm{u}\|_{L^\infty([0, T];\bm{H}^s(\Omega))}\right) \exp^{Ct\|\bm{u}\|_{L^\infty([0, T]; \bm{H}^{s+2}(\Omega))}},
\end{equation}
where $C$ is a constant depends only on $\Omega$.
\end{itemize}
\end{lemma}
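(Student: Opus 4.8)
The plan is to read \eqref{transport-1}--\eqref{transport-2} as a classical linear transport equation with prescribed divergence-free drift $\bm u$, dissipative zeroth-order term $\nu q$, and compactly supported source $\nu\nabla^\perp\cdot(\vartheta\bm u)$, and to extract all four assertions from the flow map together with energy estimates. For existence I would first mollify the drift to a smooth $\bm u_\varepsilon$, solve by characteristics, and use Duhamel along trajectories: if $\bm X(t,\alpha)$ solves $\tfrac{d}{dt}\bm X=\bm u_\varepsilon(\bm X,t)$ with $\bm X(0,\alpha)=\alpha$, then $q(\bm X(t,\alpha),t)=e^{-\nu t}q_0(\alpha)+\nu\int_0^t e^{-\nu(t-s)}\bigl(\nabla^\perp\cdot(\vartheta\bm u_\varepsilon)\bigr)(\bm X(s,\alpha),s)\dif s$. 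Since $\bm u\in V$ is divergence free and vanishes on $\Gamma$, the flow is measure preserving and keeps $\Omega$ invariant, so no boundary condition on $q$ is required; passing to the limit $\varepsilon\to0$ with the a priori bound of type (i) and the DiPerna--Lions theory of renormalized solutions (valid for $W^{1,2}$, divergence-free drifts) produces a weak solution in $C([0,T];L^2(\Omega))$. Uniqueness follows by applying the $L^2$ estimate to the difference $w$ of two solutions, which solves the homogeneous equation $\partial_t w+\bm u\cdot\nabla w+\nu w=0$ with $w(0)=0$, so that $\tfrac{d}{dt}\|w\|_{L^2}^2=-2\nu\|w\|_{L^2}^2\le0$ forces $w\equiv0$.

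For (i) I would test the equation with $|q|^{p-2}q$ for $p=1,2$. The transport term equals $\tfrac1p\int_\Omega\bm u\cdot\nabla|q|^p$, which vanishes after integration by parts because $\operatorname{div}\bm u=0$ and $\bm u\cdot\bm\nu=0$ on $\Gamma$ (decay at infinity disposing of the outer boundary term); the zeroth-order term contributes $+\nu\|q\|_{L^p}^p\ge0$, which only helps, and H\"older bounds the source by $\nu\|\nabla^\perp\cdot(\vartheta\bm u)\|_{L^p}\|q\|_{L^p}^{p-1}$. Integrating in time then gives the stated inequality once one checks $\|\nabla^\perp\cdot(\vartheta\bm u)\|_{L^p}\lesssim\|\bm u\|_{\bm H^1}$. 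Splitting $\nabla^\perp\cdot(\vartheta\bm u)=\vartheta\,\nabla^\perp\cdot\bm u+\nabla^\perp\vartheta\cdot\bm u$, the first term is $\le\|\nabla\bm u\|_{L^2}$ as $0\le\vartheta\le1$; for $p=2$ the second term is controlled by a Hardy/Poincar\'e inequality for $\bm H_0^1$ fields in the exterior domain, whose constant depends only on $\Omega$, whereas for $p=1$ one first passes through $L^2$ over the bounded set $\supp\vartheta$, and it is this step that brings in the dependence on $\supp\vartheta$.

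Assertion (ii) is read directly off the Duhamel formula: the source is supported in $\supp\vartheta$, so $q(\cdot,t)$ can be nonzero at $x=\bm X(t,\alpha)$ only when $\alpha\in\supp q_0$ or the trajectory through $\alpha$ meets $\supp\vartheta$ at some time $s\le t$; in other words, new vorticity is generated only inside $\supp\vartheta$. Any trajectory is displaced over $[0,t]$ by at most $\int_0^t\|\bm u(\cdot,s)\|_{L^\infty}\dif s$, and in two dimensions $\bm H^2(\Omega)\hookrightarrow L^\infty(\Omega)$ gives $\|\bm u\|_{L^\infty}\le C\|\bm u\|_{\bm H^2}$; hence any two points of $\supp q(\cdot,t)$ differ by at most $\delta(\supp q_0\cup\supp\vartheta)+2C\int_0^t\|\bm u\|_{\bm H^2}\dif s$, which is the claimed bound, since both supports lie within a set of diameter comparable to $R$ in the intended application.

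For (iii) I would run a higher-order energy estimate: apply $\partial^\beta$ for each $|\beta|\le s$, test with $\partial^\beta q$, and sum. The principal transport term cancels once more by $\operatorname{div}\bm u=0$ and $\bm u|_\Gamma=0$, the damping yields $+\nu\|\partial^\beta q\|_{L^2}^2\ge0$, and two contributions remain: the commutator $[\partial^\beta,\bm u\cdot\nabla]q$ and the differentiated source. I would bound the commutator term by term, writing it as $\sum_{0<\gamma\le\beta}\binom{\beta}{\gamma}\partial^\gamma\bm u\cdot\nabla\partial^{\beta-\gamma}q$, placing each factor $\partial^\gamma\bm u$ in $L^\infty$ via $\bm H^2\hookrightarrow L^\infty$ (so that $\|\partial^\gamma\bm u\|_{L^\infty}\le C\|\bm u\|_{\bm H^{s+2}}$) and the companion $q$-factor in $L^2$ (which fits inside $\|q\|_{H^s}$); this yields $\|[\partial^\beta,\bm u\cdot\nabla]q\|_{L^2}\le C\|\bm u\|_{\bm H^{s+2}}\|q\|_{H^s}$, and is exactly where the loss of two derivatives producing the norm $\|\bm u\|_{\bm H^{s+2}}$ in the exponential enters. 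The source is estimated by $\nu\|\nabla^\perp\cdot(\vartheta\bm u)\|_{H^s}\|q\|_{H^s}\lesssim\nu\|\bm u\|_{\bm H^{s+1}}\|q\|_{H^s}$ (the order $s+1$ being forced, since one cannot move a derivative off the source without raising the order of $q$ above $s$); splitting off $\tfrac\nu2\|q\|_{H^s}^2$ into the damping leaves a forcing whose time integral, after Gr\"onwall on $\|q\|_{H^s}^2$ and a square root, produces the prefactor $\nu^{1/2}T^{1/2}\|\bm u\|_{L^\infty([0,T];\bm H^{s+1})}$ together with the growth rate $C\|\bm u\|_{\bm H^{s+2}}$, a bound of the form \eqref{high-transport-estimate} (this order is harmless, being dominated by the $\bm H^{s+2}$ regularity already assumed). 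I expect the main obstacle to be this last step: establishing the commutator bound on the exterior domain $\Omega$, where Fourier methods are unavailable, with a constant depending only on $\Omega$, and tracking the derivative count so that exactly two derivatives---and no more---are lost into $\|\bm u\|_{\bm H^{s+2}}$. A secondary technical point is the rigorous justification of the characteristic/Duhamel representation used in existence and in (ii) when the drift is only $\bm H^1$ (respectively $\bm H^2$) rather than Lipschitz, which is precisely where the renormalized-solution framework is needed.
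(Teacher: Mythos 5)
Your proposal is correct, and for the three quantitative assertions it follows essentially the same route as the paper: (i) is the same $L^p$ energy estimate (the paper phrases it through a pointwise inequality for $|q|$ obtained on the sets $\Omega^{\pm}=\{\pm q>0\}$ rather than testing with $|q|^{p-2}q$, which is cosmetic); (ii) is the same Duhamel-along-characteristics argument combined with $\bm{H}^2(\Omega)\hookrightarrow L^\infty(\Omega)$ to bound the displacement of trajectories; (iii) is the same differentiate--test--commutator--Gr\"onwall scheme, with the commutator handled exactly as you propose (derivatives of $\bm{u}$ placed in $L^\infty$ via Sobolev embedding), and the paper's own computation indeed produces the prefactor $\nu^{1/2}T^{1/2}\|\bm{u}\|_{L^\infty([0,T];\bm{H}^{s+1}(\Omega))}$ that you predict, rather than the $\bm{H}^s$ norm displayed in \eqref{high-transport-estimate}. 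Where you genuinely diverge is the well-posedness machinery: the paper extends $\bm{u}$ and $q_0$ by zero to $\mathbb{R}^2$ and regularizes the \emph{operator} rather than the drift, replacing the advection term by $J_\varepsilon\left[\bm{u}\cdot\nabla J_\varepsilon q^\varepsilon\right]$ as in \eqref{app-transport-1}, so that the right-hand side becomes a Lipschitz map on $L^2(\mathbb{R}^2)$; Cauchy--Lipschitz then gives a solution for each $\varepsilon$, and two successive Banach--Alaoglu weak-star limits (in $\varepsilon$, and then along drifts $\bm{u}^n\in C([0,T];V)$ approximating $\bm{u}\in L^\infty([0,T];V)$) yield the weak solution, with uniqueness essentially asserted rather than proved. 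Your route---mollify the drift, solve by characteristics, and pass to the limit in the DiPerna--Lions renormalized framework (applicable here since the extended drift is divergence-free and in $W^{1,2}$, while $q\in L^\infty([0,T];L^2)$)---is equally valid and buys something the paper leaves implicit: renormalization is exactly what justifies the $L^2$ energy identity for the difference of two weak solutions, hence uniqueness.

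Two points to tighten. First, mollify only \emph{after} extending $\bm{u}$ by zero to $\mathbb{R}^2$ (as the paper does) and solve the regularized problem in the whole plane: the mollified drift is in general not tangent to $\Gamma$, so its flow does not preserve $\Omega$, and the invariance you invoke holds only for the limiting field. Second, your version of (ii) is actually more careful than the paper's: the paper concludes via Gr\"onwall that $q(\bm{X}(t,\alpha),t)=0$ for every $\alpha\notin B(0,R)$, which overlooks trajectories that start outside $B(0,R)$ but later enter $\supp\,\vartheta$ and pick up the source; your bookkeeping---$q(\cdot,t)$ can be nonzero only on forward images of points of $\supp\,q_0$ or of points whose trajectory meets $\supp\,\vartheta$---is the correct statement, modulo the caveat you yourself note that \eqref{remark-equation-1} reads as a diameter bound only when the two supports lie in a common ball of size comparable to $R$, as they do in the intended application.
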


\begin{proof}
We begin to prove a priori estimate $(\ref{transport-3})$. Fix $t \in [0, T]$, setting $\Omega^+ = \{x \in \Omega \ \big| \ q(t, x) > 0\}$, then $(\ref{transport-1})$ implies
\begin{align}
\partial_t q + \bm{u} \cdot \nabla q + \nu q \leqslant \nu |\nabla^\perp\cdot(\vartheta \bm{u})| \text{ in }\Omega^+.
\end{align}
Similarily setting $\Omega^- = \{x \in \Omega \ \big| \ q(t, x) < 0\}$, we have
\begin{align}
\partial_t (-q) + \bm{u} \cdot \nabla (-q) + \nu (-q) \leqslant \nu |\nabla^\perp\cdot(\vartheta \bm{u})| \text{ in }\Omega^-.
\end{align}
Collecting the above two inequalities, it follows that
\begin{align} \label{transport-tmp-3}
\partial_t |q| + \bm{u} \cdot \nabla |q| + \nu |q| \leqslant \nu |\nabla^\perp\cdot(\vartheta \bm{u})| \text{ in } \Omega.
\end{align}
Next, multiply ($\ref{transport-tmp-3}$) by $|q|^{p-1}$ and integrate over $\Omega \times [0, t]$, we find that
\begin{align}
\frac{1}{p}\|q\|_{L^p(\Omega)}^p + \nu \int_0^t \|q\|_{L^p}^p \leqslant \frac{1}{p}\|q_0\|_{L^p(\Omega)}^p + \nu \int_0^t\int_\Omega |\nabla^\perp\cdot(\vartheta \bm{u})| |q|^{p-1} \dif x \dif s.
\end{align}
By H\"older inequality and Young inequality,  the above inequality yields
\begin{align} \label{transport-tmp-4}
\|q\|_{L^p(\Omega)}^p \leqslant \|q_0\|_{L^p(\Omega)}^p + C\nu\int_0^t \|\nabla^\perp\cdot(\vartheta \bm{u})\|^p_{L^p(\Omega)} \dif s
\end{align}
Notice that $\vartheta$ is compactly supported, therefore
\begin{align}
\|\nabla^\perp\cdot(\vartheta \bm{u})\|^p_{L^p(\Omega)} \leqslant C\|\bm{u}\|^p_{\bm{H}^1(\Omega)},
\end{align}
then \eqref{transport-3} is followed by substitution of the above inequality into \eqref{transport-tmp-4}.

We now prove existence and uniqueness. Let us assume $\bm{u} \in C([0, T]; V)$ temporarily. Set $\bm{u}$ and $q_0$ to zero in $\mathbb{R}^2 \setminus \Omega$, and consider the following approximate equations in full plane $\mathbb{R}^2$:
\begin{numcases}{}
    \partial_t q^\varepsilon + J_{\varepsilon}\left[\bm{u} \cdot (\nabla J_{\varepsilon}q^\varepsilon)\right] + \nu q^\varepsilon -\nu \nabla^\perp\cdot(\vartheta \bm{u})= 0 & $ \text{ in } [0, T] \times \mathbb{R}^2$, \label{app-transport-1}\\
    q^\varepsilon|_{t=0} = q_0 & $\text { in } \mathbb{R}^2$\label{app-transport-2},
\end{numcases}
where $J_\varepsilon$ is the standard Friedrichs mollifier. Let us define map $\mathcal{G}: L^2(\mathbb{R}^2) \times [0, T] \rightarrow L^2(\mathbb{R}^2)$ as
\begin{align}
\mathcal{G}(q^\varepsilon, t) := J_{\varepsilon}\left[\bm{u} \cdot (\nabla J_{\varepsilon}q^\varepsilon)\right] + \nu q^\varepsilon -\nu \nabla^\perp\cdot(\vartheta \bm{u}).
\end{align}
It's easy to check that $\mathcal{G}$ is uniformly Lipschitz continuous in $q^\varepsilon$ for $t\in[0, T]$ and continuous in t. Indeed, let $q_1, q_2 \in L^2(\Omega)$, it follows
\begin{align}
\nonumber\|\mathcal{G}(q_1, t) - \mathcal{G}(q_2, t)\|_{L^2(\Omega)} &= \|\bm{u}(t)\cdot \nabla [J_{\varepsilon} (q_1 - q_2)] + \nu(q_1 - q_2)\|_{L^2(\Omega)}\\
\nonumber&\leqslant \|\bm{u}(t)\|_{L^2(\Omega)}\|\nabla [J_\varepsilon(q_1 - q_2)]\|_{L^\infty(\Omega)} + \nu \|q_1 - q_2\|_{L^2(\Omega)}\\
&\leqslant C\varepsilon^{-3}\|\bm{u}\|_{L^\infty([0, T];L^2(\Omega))} \|q_1 - q_2\|_{L^2(\Omega)} + \nu \|q_1 - q_2\|_{L^2(\Omega)},
\end{align}
$\mathcal{G}$ is therefore uniformly Lipschitz continuous in $q^\varepsilon$ for $t \in [0, T]$. On the other hand, for $t, s \in [0, T]$, we have
\begin{align}
\nonumber \|\mathcal{G}(q^\varepsilon, t) - \mathcal{G}(q^\varepsilon, s)\|_{L^2(\Omega)} &= \|(\bm{u}(t) - \bm{u}(s))\cdot \nabla [J_\varepsilon q^\varepsilon]\|_{L^2(\Omega)} + \nu\|\nabla [\vartheta(\bm{u}(t)-\bm{u}(s))]\|_{L^2(\Omega)}\\
&\leqslant C(\varepsilon^{-3}\|q^\varepsilon\|_{L^2(\Omega)} + \nu)\|(\bm{u}(t) - \bm{u}(s))\|_{\bm{H}^1(\Omega)}.
\end{align}
 Since $\bm{u}$ has be temporarily assumed to belong to $\in C([0, T];V)$, we conclude that $\mathcal{G}$ is continuous in $t$. By Cauchy-Lipschitz Theorem, there must exists a unique local solution $q^\varepsilon \in C^1([0, h]; L^2(\mathbb{R}^2))$ with some $h > 0$. From $(\ref{transport-3})$, we know that $\bm{u}$ is uniformly bounded, therefore the solution can extends to $[0, T]$.  Moreover, Banach-Alaoglu theorem implies that there exists a subsequence of  $\{q^\varepsilon\}$ converges weak-star to some $q$ in $L^\infty([0, T]; L^2(\Omega))$ as $\varepsilon \rightarrow 0$. It's easy to see that $q$ satisfies equation $(\ref{transport-1})$--$(\ref{transport-2})$ in weak sense.

We now consider the case $\bm{u} \in L^\infty([0, T]; V)$. Since $C([0, T]; V)$ is dense in $L^\infty([0, T]; V)$, there exists a sequence $\bm{u}^n \in C([0, T]; V)$ converges in $L^\infty([0, T]; V)$ to $\bm{u}$. We know for each $n \in \mathbb{N}$, the following equation
\begin{numcases}{}
    \partial_t q^n + \textbf{u}^n \cdot \nabla{q^n} + \nu q^n -\nu \nabla^\perp \cdot (\vartheta \bm{u}^n)= 0, \label{approximate-transport-1}\\
    q^n|_{t=0} = q_0 \label{approximate-transport-2}
\end{numcases}
has a unique solution $q^n \in L^\infty([0, T]; L^2(\Omega))$ satisfying \eqref{transport-3}, again by  Banach-Alaoglu theorem we know there exists a subsequence of $q^n$ converges  weak-star to some $q$ in $L^\infty([0, T]; V)$ as $n \rightarrow \infty$. It is easy to check that $q$ satisfies equation $(\ref{transport-1})$--$(\ref{transport-2})$ in weak sense.

We now show that $q \in C([0, T]; V)$. Indeed, multiplying $(\ref{app-transport-1})$ by $q$ and integrating over $\Omega$, it follows
\begin{align}
\frac{\dif}{\dif t} \|q\|_{L^2(\Omega)}^2 = - \nu \int_{\Omega} \left[\|q\|^2 - \nabla^\perp \cdot(\vartheta\bm{u}) q\right] \dif x.
\end{align}
Observing that the right hand of the above equality is integrable over $[0, T]$, $q$ is therefore Lipschitz continuous in $[0, T]$.

We now begin to prove ($\ref{remark-equation-1}$). Indeed, let $\bm{X}\in Lip([0, T]; (C(\Omega))^2)$ be the unique solution of the following ordinary differential equations
\begin{numcases}{}
\nonumber \frac{\dif \bm{X}(t, \alpha)}{\dif t} = \bm{u}(\bm{X}(t, \alpha), t) & $\text{in } \Omega \times [0, T]$,\\
\nonumber \bm{X}(0, \alpha) = \alpha & $ \forall \alpha \, \text{ in } \Omega$.
\end{numcases}
We observe that
\begin{equation*}
{q}(\bm{X}(t, \alpha), t) = q_0(\alpha) - \nu \int_0^t q(\bm{X}(\alpha,s), s)\dif s + \nu \int_0^t \nabla^\perp\cdot(\vartheta \bm{u}(\bm{X}(\alpha, s),s)) \dif s
\end{equation*}
for $(\alpha, t) \in \Omega \times [0, T]$. Let us consider those $\alpha \in \Omega \setminus B(0, R)$, recalling that $\delta(\supp \, \vartheta) < R$ and $\delta(\supp \, q_0) < R$, it follows that
\begin{align}
|q(\bm{X}(t, \alpha), t)| \leqslant \nu \int_0^s |q(\bm{X}(\alpha, s), s)| \dif s.
\end{align}
Then thanks to Gr\"onwall inequality, we know that
\begin{align} \label{lang-1}
q(\bm{X}(\alpha, t), t) = 0 \text{ for } (\alpha, t) \in \Omega \setminus B(0, R) \times [0, T].
\end{align}
Observe that
\begin{align}\label{lang-2}
\nonumber|\bm{X}(\alpha, t)| &\leqslant |\alpha| + \int_0^t \|\bm{u}(s)\|_{L^\infty(\Omega)}\dif s\\
&\leqslant |\alpha| + C\int_0^t \|\bm{u}(s)\|_{\bm{H^2}(\Omega)} \dif s.
\end{align}
Collecting \eqref{lang-1} and \eqref{lang-2}, we obtain $(\ref{remark-equation-1})$.

It remains to show (\ref{high-transport-estimate}). Indeed, differentiate equation (\ref{transport-1}) $\alpha$ times, we have
\begin{equation}
\nonumber \partial_t D^\alpha q + \textbf{u} \cdot \nabla{ (D^\alpha q)} + \sum_{\beta < \alpha, \beta + \gamma = \alpha} D^\gamma u \cdot \nabla (D^\beta q) + \nu D^\alpha q - \nu D^\alpha \nabla^\perp\cdot(\vartheta \bm{u})= 0
\end{equation}
Multiplying the above equation by $D^\alpha q$, integrating over $\Omega$ and summing up from $\alpha = 0$ to $|\alpha| = s$, we obtain a priori estimate:
\begin{equation}
\nonumber \frac{\dif}{\dif t} \|q(t)\|_{{H}^{s}(\Omega)}^2 \leqslant C (\|\bm{u}\|_{L^\infty([0, T];\bm{H}^{s+2}(\Omega))} \|q(t)\|_{H^{s}(\Omega)}^2 + C\nu\|\bm{u}\|_{L^\infty([0, T]; \bm{H}^{s+1}(\Omega))}^2)
\end{equation}
for $t \in [0, T]$, where we have used H\"older inequality.  by Gr\"onwall inequality, we conclude that (\ref{high-transport-estimate}) holds. The proof of this lemma is completed.
\end{proof}

Let us then consider equation \eqref{vstream-2} of the vorticity stream formula, which is the Poisson equation. As we know, there are many classical results to Poisson equation in a bounded domain with the Dirichlet boundary conditions:
\begin{numcases}{}
\Delta \psi = q & $ \text{in } \,\Omega$, \label{poisson-1} \\
\psi = 0 & $ \text{on } \, \Gamma$. \label{poisson-2}
\end{numcases}
For instance,  in \cite{gilbarg2015elliptic},the existence and uniqueness of solutions to the problem above was established. Furthermore, the authors proved that
\begin{align}\label{bdd-poisson-estimate}
 \|\psi\|_{\bm{H}^2(\Omega)} \leqslant C \|q\|_{L^2(\Omega)}.
\end{align}
In exterior domain, we could not get \eqref{bdd-poisson-estimate} in general, however the following results that established in our previous work \cite{2109.00915} will suffice in this article.
\begin{lemma}\label{lemma-poisson}{}
Let $R > 0$ be fixed. Suppose $q \in L^{2}(\Omega)$ and $\mathrm{supp}\, q \subset B(0, R)$, then the Poisson equations $(\ref{poisson-1})$--$(\ref{poisson-2})$
has a unique solution $\psi \in \dot{\bm{H}}(\Omega)$, which obeys the following estimates
\begin{align}
&\| \nabla \psi \|_{L^2(\Omega)} \leqslant CR(\|q\|_{L^2(\Omega)} + \|q\|_{L^1(\Omega)})\label{poisson-3-1}, \\
&\| D^2 \psi \|_{L^2(\Omega)} \leqslant C(R\|q\|_{L^2(\Omega)} + \|q\|_{L^1(\Omega)}) \label{poisson-3-2}.
\end{align}
Moreover, if $q \in H^s(\Omega)$ for $s \in \mathbb{N}$, it follows
\begin{equation}\label{esimate-du-m}
\begin{aligned}
\|D^{s+2}\psi\|_{L^2(\Omega)} \leqslant C ( \|q\|_{H^{s}(\Omega)} + \|\nabla \psi\|_{L^2(\Omega)}).
\end{aligned}
\end{equation}
\end{lemma}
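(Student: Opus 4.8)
The plan is to treat \eqref{poisson-1}--\eqref{poisson-2} by the variational method adapted to the homogeneous space, and then to extract the quantitative bounds \eqref{poisson-3-1}--\eqref{esimate-du-m} from the energy identity together with interior and boundary elliptic regularity, keeping careful track of the way the radius $R$ enters.

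First I would settle existence and uniqueness. Let $\dot{H}^1_0(\Omega)$ denote the completion of $C_0^\infty(\Omega)$ under the Dirichlet seminorm $\|\nabla\cdot\|_{L^2(\Omega)}$. Because $\Omega$ is a two-dimensional exterior domain whose complement $\overline{\mathcal{O}}$ has nonempty interior, functions vanishing on $\Gamma$ obey a Hardy-type inequality, so this seminorm is in fact a norm and $\dot{H}^1_0(\Omega)$ is a Hilbert space continuously embedded in $\dot{\bm H}(\Omega)$. On it the bilinear form $a(\psi,\phi)=\int_\Omega \nabla\psi\cdot\nabla\phi\,\dif x$ is bounded and coercive by construction, and since $q$ is supported in $B(0,R)$ the functional $\phi\mapsto-\int_\Omega q\phi\,\dif x$ is bounded (the compact support lets one estimate $\int_\Omega q\phi\,\dif x$ by $\|q\|_{L^2}$ against a localized $L^2$ norm of $\phi$, itself controlled through the same Hardy inequality). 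Lax--Milgram then produces a unique $\psi$, and uniqueness in the class $\dot{\bm H}(\Omega)$ follows because the difference of two solutions is harmonic in $\Omega$, vanishes on $\Gamma$, and has finite Dirichlet energy, hence is constant and therefore zero after integrating by parts.

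Next come the quantitative estimates. Testing the equation with $\psi$ gives the energy identity $\|\nabla\psi\|_{L^2(\Omega)}^2=-\int_{B(0,R)}q\psi\,\dif x$, so everything reduces to bounding this pairing. The simultaneous appearance of $\|q\|_{L^2}$ and $\|q\|_{L^1}$ alongside the factor $R$ is dictated by the borderline logarithmic behaviour of the two-dimensional Laplacian: I would estimate the pairing with a logarithmic Hardy/Poincar\'e inequality on $B(0,R)\cap\Omega$, pairing the slowly decaying far-field contribution against $\|q\|_{L^1}$ and the near-field contribution against $\|q\|_{L^2}$, and then absorb $\|\nabla\psi\|_{L^2}$ to obtain \eqref{poisson-3-1}, with the logarithmic losses accounted for by the factor $R$. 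For \eqref{poisson-3-2} I would split $\Omega$ into the near field $\Omega_{2R}$ and the far field $\Omega^{2R}$. On $\Omega_{2R}$, interior and boundary $H^2$ elliptic regularity for $\Delta\psi=q$ gives $\|D^2\psi\|_{L^2(\Omega_{2R})}\le C(\|q\|_{L^2}+\|\nabla\psi\|_{L^2})$; on $\Omega^{2R}$ the datum vanishes, $\psi$ is harmonic, and a Caccioppoli argument controls $\|D^2\psi\|_{L^2(\Omega^{2R})}$ by the already-bounded $\|\nabla\psi\|_{L^2}$. Inserting \eqref{poisson-3-1} and tracking the scaling in $R$ yields the stated combination $R\|q\|_{L^2}+\|q\|_{L^1}$. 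Finally, for \eqref{esimate-du-m} I would differentiate: for $|\alpha|\le s$ the function $D^\alpha\psi$ solves $\Delta(D^\alpha\psi)=D^\alpha q$, and iterating the interior/boundary estimate one order at a time upgrades the regularity, giving $\|D^{s+2}\psi\|_{L^2}\le C(\|q\|_{H^s}+\|\nabla\psi\|_{L^2})$; the lowest-order term cannot be absorbed on an unbounded domain, which is precisely why the bound is phrased relative to $\|\nabla\psi\|_{L^2}$ rather than to $q$ alone.

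The main obstacle, I expect, is the first-order bound \eqref{poisson-3-1} with the correct $R$-dependence. In two dimensions $\nabla\psi$ is only borderline square-integrable, the full-plane Newtonian potential of a general compactly supported $L^2$ datum need not have an $L^2$ gradient, and it is exactly the Dirichlet condition on $\Gamma$ (equivalently, the positive capacity of the hole $\mathcal{O}$) that restores integrability. Making this mechanism quantitative, so that the logarithmic losses are absorbed and the clean factor $R$ together with the split $\|q\|_{L^2}+\|q\|_{L^1}$ emerge, is the delicate step; existence, uniqueness, and the higher-order estimates are then comparatively routine elliptic theory.
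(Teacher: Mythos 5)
Your existence--uniqueness argument and the parts you call routine (\eqref{esimate-du-m}, the Lax--Milgram setup, the Liouville argument for finite-energy harmonic functions) are sound, but the two quantitative estimates you single out as the heart of the matter do not close as described. For \eqref{poisson-3-1}: since $\supp q \subset B(0,R)$, the pairing $\int_\Omega q\psi\,\mathrm{d}x$ lives entirely on $\Omega_R$, so your proposed dichotomy of ``pairing the far-field contribution against $\|q\|_{L^1}$'' is vacuous --- there is no far-field contribution to the pairing. What your plan actually yields is the crude chain $\|\nabla\psi\|_{L^2}^2 \leqslant \|q\|_{L^2}\|\psi\|_{L^2(\Omega_R)}$ combined with the logarithmic Hardy/Poincar\'e inequality, i.e. $\|\nabla\psi\|_{L^2} \leqslant CR\log R\,\|q\|_{L^2}$, and this is \emph{not} dominated by $CR(\|q\|_{L^2}+\|q\|_{L^1})$: take $q$ with $\|q\|_{L^2}=1$ supported in a single unit disk inside $B(0,R)$, so that $\|q\|_{L^1}\leqslant C$; then the stated bound is $O(R)$ while yours is $O(R\log R)$. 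The logarithm must be paid against $\|q\|_{L^1}$, not against $\|q\|_{L^2}$, and that forces a decomposition of $\psi$ rather than of the domain: writing $\psi=\bar\psi(|x|)+(\psi-\bar\psi)$ with $\bar\psi$ the circular mean (after extending $\psi$ by zero into $\mathcal{O}$, which is where the positive capacity of the hole enters), one gets $|\bar\psi(r)|\leqslant C\sqrt{\log r}\,\|\nabla\psi\|_{L^2}$ by integrating the radial derivative from inside $\mathcal{O}$, to be paired with $\|q\|_{L^1}$ (and $\sqrt{\log R}\leqslant CR$), while Wirtinger's inequality on circles gives $\|\psi-\bar\psi\|_{L^2(\Omega_R)}\leqslant CR\|\nabla\psi\|_{L^2}$, to be paired with $\|q\|_{L^2}$. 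With that substitution your scheme closes; as written it does not.

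For \eqref{poisson-3-2} there is a second, independent gap: your near-field elliptic estimate reads $\|D^2\psi\|_{L^2(\Omega_{2R})}\leqslant C(\|q\|_{L^2}+\|\nabla\psi\|_{L^2})$ with no $1/R$ gain on the gradient term (only the far-field Caccioppoli piece gains a factor $1/R$), so inserting \eqref{poisson-3-1} produces $C(R\|q\|_{L^2}+R\|q\|_{L^1})$; no amount of ``tracking the scaling'' removes the extra $R$ on $\|q\|_{L^1}$, and you end up proving a strictly weaker inequality than \eqref{poisson-3-2}. You should also be aware that this paper does not prove the lemma at all --- it is imported from \cite{2109.00915} --- and the precise constant structure there is most naturally obtained from a representation formula rather than from energy estimates: writing $\psi$ as the Newtonian potential of $q$ plus a harmonic correction (equivalently, using the exterior Green's function), Young's inequality on the near-diagonal kernel $|x-y|^{-1}$ over a ball of radius $CR$ produces the term $R\|q\|_{L^2}$, the dipole-type decay $O(R|x|^{-2})$ of the boundary-corrected kernel at infinity produces $\|q\|_{L^1}$, and Calder\'on--Zygmund applied to the Newtonian part gives \eqref{poisson-3-2} without loss. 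To be fair, the weaker bound $\|D^2\psi\|_{L^2}\leqslant CR(\|q\|_{L^2}+\|q\|_{L^1})$ that your argument does reach would still suffice for the only place the lemma is used, namely the estimate \eqref{pf-poisson} in the proof of Proposition \ref{proposition-1}, but it is not the statement you set out to prove.
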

\begin{remark} Althougth the $L^1(\Omega)$-norm of $q$ is bounded by its $L^2(\Omega)$-norm as $q$ is compactly supported, the $L^1(\Omega)$-norms of $q$ in \eqref{poisson-3-1} and \eqref{poisson-3-2} are reserved, for showing that the coefficients of the inequalities $(\ref{poisson-3-1})$ and $(\ref{poisson-3-2})$  depend linearly on $R$. This property would be a vital component in the proof of Proposition \ref{proposition-1}.
\end{remark}

Next, let us consider equation \eqref{vstream-3} of the vorticity stream formula, which is the stationary Stokes equation.  The following results can be found, for example in \cite{borchers1993boundedness, galdi2011introduction}.
\begin{lemma}\label{lemma-stokes}
Let $\varphi  \in L^2_{\sigma}(\Omega) \cap \bm{H}^1(\Omega)$, then the stationary Stokes equations:
\begin{numcases}{}
            \bm{u}(x) - \Delta \bm{u}(x) + \nabla p = \varphi(x) &  $\text{in } \Omega \label{stokes-1} $,\\
             \bm{u} = 0 & $\text{on } \Gamma$ \label{stokes-2}
\end{numcases}
has a unique solution $\displaystyle \bm{u} \in \bm{H}^3(\Omega) \cap V$, which obeys the estimate
\begin{equation}\label{stokes-3}
     \begin{split}
      \|\bm{u}\|_{\bm{H}^3(\Omega)} \leqslant C \|\varphi\|_{\bm{H}^1(\Omega)}.
     \end{split}
\end{equation}
\end{lemma}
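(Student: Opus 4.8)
The plan is to proceed in the classical two-stage manner for elliptic systems: first produce a unique weak solution by a variational argument, then upgrade its regularity by a bootstrap built on the stationary Stokes regularity theory. First I would set up the weak formulation. Testing \eqref{stokes-1} against an arbitrary $\bm{w} \in V$ and integrating by parts, the pressure term disappears because $\mathrm{div}\,\bm{w} = 0$, and the boundary contribution vanishes since $\bm{w} \in V \subset \bm{H}_0^1(\Omega)$. This reduces the problem to: find $\bm{u} \in V$ such that
\begin{equation}
\nonumber (\bm{u}, \bm{w})_{L^2(\Omega)} + (\nabla \bm{u}, \nabla \bm{w})_{L^2(\Omega)} = (\varphi, \bm{w})_{L^2(\Omega)} \quad \text{for all } \bm{w} \in V.
\end{equation}
The bilinear form on the left is exactly the $\bm{H}^1(\Omega)$ inner product, so it is bounded and coercive on $V$, its value on $(\bm{u},\bm{u})$ being $\|\bm{u}\|_{\bm{H}^1(\Omega)}^2$; here the zeroth-order term $\bm{u}$ in \eqref{stokes-1} is essential, as it yields coercivity in the \emph{full} $\bm{H}^1$-norm and thereby circumvents the failure of the Poincar\'e inequality in the unbounded domain $\Omega$. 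Since $\varphi \in L^2(\Omega)$, the right-hand side is a bounded linear functional on $V$, and the Lax--Milgram theorem (or Riesz representation, the form being symmetric) delivers a unique $\bm{u} \in V$ with $\|\bm{u}\|_{\bm{H}^1(\Omega)} \leqslant C\|\varphi\|_{L^2(\Omega)}$. The pressure $p$ is recovered by de Rham's theorem: the functional $\bm{w} \mapsto (\bm{u}, \bm{w}) + (\nabla \bm{u}, \nabla \bm{w}) - (\varphi, \bm{w})$ vanishes on $\mathcal{D}$, hence equals $\nabla p$ for some $p$, so that \eqref{stokes-1} holds distributionally. Uniqueness of the strong solution follows from the same energy identity applied to the difference of two solutions, which forces it to vanish.

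Next I would bootstrap the regularity. Rewriting \eqref{stokes-1} as the genuine Stokes system
\begin{equation}
\nonumber -\Delta \bm{u} + \nabla p = \bm{f}, \qquad \mathrm{div}\,\bm{u} = 0, \qquad \bm{u}|_\Gamma = 0, \qquad \bm{f} := \varphi - \bm{u},
\end{equation}
I would invoke the $L^2$-regularity theory for the stationary Stokes problem in the exterior domain (the content of \cite{borchers1993boundedness,galdi2011introduction}), which states that $\bm{f} \in \bm{H}^m(\Omega)$ forces $\bm{u} \in \bm{H}^{m+2}(\Omega)$ together with a corresponding estimate. Two applications suffice. At the first step $\bm{f} = \varphi - \bm{u} \in L^2(\Omega)$, whence $\bm{u} \in \bm{H}^2(\Omega)$ with $\|\bm{u}\|_{\bm{H}^2(\Omega)} \leqslant C\|\varphi\|_{L^2(\Omega)}$; at the second step $\bm{f} = \varphi - \bm{u} \in \bm{H}^1(\Omega)$ (both terms now lie in $\bm{H}^1$), whence $\bm{u} \in \bm{H}^3(\Omega)$ and
\begin{equation}
\nonumber \|\bm{u}\|_{\bm{H}^3(\Omega)} \leqslant C\big(\|\varphi\|_{\bm{H}^1(\Omega)} + \|\bm{u}\|_{\bm{H}^1(\Omega)}\big) \leqslant C\|\varphi\|_{\bm{H}^1(\Omega)},
\end{equation}
using the $\bm{H}^1$-bound from the first stage. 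This is precisely \eqref{stokes-3}.

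The main obstacle is the Stokes regularity estimate itself in the \emph{unbounded} domain. On a bounded domain this is the standard Agmon--Douglis--Nirenberg theory, but in the exterior setting one must control the behavior at infinity and cannot rely on compactness. I would handle it by a localization argument via a partition of unity: near $\Gamma$ use local up-to-the-boundary Stokes estimates after flattening the boundary, on a bounded collar use interior estimates, and in the far field (the complement of a large ball) transfer to the whole-space Stokes resolvent problem, where explicit Fourier/resolvent bounds apply. The pieces are glued together, the commutators produced by the cut-offs being absorbed as lower-order terms controlled by the already-established $\bm{H}^1$ (respectively $\bm{H}^2$) norm. Equivalently, one may phrase everything through the Stokes operator $A = -P\Delta$ on $L^2_{\sigma}(\Omega)$, with $P$ the Leray (Helmholtz) projection and $D(A) = \bm{H}^2(\Omega) \cap V$, writing $\bm{u} = (I+A)^{-1}\varphi$ (legitimate since $\varphi \in L^2_{\sigma}(\Omega)$ gives $P\varphi = \varphi$) and exploiting the mapping properties of $(I+A)^{-1}$; the invertibility of $I + A$ without any decay hypothesis is again a direct consequence of the zeroth-order term. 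This far-field analysis, together with the identification $D(A) = \bm{H}^2(\Omega) \cap V$ in the exterior domain, is where the cited references do the real work.
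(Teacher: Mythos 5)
The paper offers no proof of this lemma at all: it is stated as a known result, quoted directly from \cite{borchers1993boundedness,galdi2011introduction}. Your proposal is a correct reconstruction of the standard argument that lies behind those citations. The variational step is sound: the bilinear form is exactly the $\bm{H}^1(\Omega)$ inner product, so Lax--Milgram applies with coercivity constant $1$, and your observation that the zeroth-order term in $\bm{u}-\Delta\bm{u}$ is what rescues coercivity in the absence of a Poincar\'e inequality on the exterior domain is precisely the right point. Recovering $p$ by de Rham's theorem is legitimate (and consistent with the paper's own toolkit, compare Corollary \ref{remark-harmonic}); in the uniqueness step, note that the pairing $(\nabla p,\bm{w})_{L^2(\Omega)}$ should be annihilated by approximating $\bm{w}\in V$ by fields in $\mathcal{D}$, rather than by an integration-by-parts-at-infinity argument. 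The two-step bootstrap, $\bm{f}=\varphi-\bm{u}\in \bm{L}^2(\Omega)\Rightarrow\bm{u}\in\bm{H}^2(\Omega)$ and then $\bm{f}\in\bm{H}^1(\Omega)\Rightarrow\bm{u}\in\bm{H}^3(\Omega)$, closed by the first-stage $\bm{H}^1$ bound, is the standard route to \eqref{stokes-3}. The one caveat is that the load-bearing ingredient --- the up-to-the-boundary $L^2$ Stokes regularity estimate in an exterior domain, of the form
\begin{equation}
\nonumber
\|\bm{u}\|_{\bm{H}^{m+2}(\Omega)}\leqslant C\left(\|\bm{f}\|_{\bm{H}^{m}(\Omega)}+\|\bm{u}\|_{\bm{H}^{m+1}(\Omega)}\right),
\end{equation}
equivalently the identification $D(A)=\bm{H}^2(\Omega)\cap V$ --- is not actually proved in your sketch either: the localization/partition-of-unity outline is the right strategy, but it remains an outline, and in the end you delegate exactly this point to the same references the paper cites (the paper does the same elsewhere, invoking Proposition 6 of \cite{giga2018handbook} in the proof of Theorem \ref{theorem-1}). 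So your proof is correct modulo precisely the literature the lemma is quoted from, which is a reasonable place to stand, and it has the merit of isolating which parts are elementary and which part carries the real difficulty.
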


At the end of this section, we introduce some further studies about Helmholtz–Weyl decomposition in exterior domain, which will be used to ensure that the vorticity stream formula \eqref{vstream-1}--\eqref{vstream-3} are equivalent to original equation \eqref{second-grade-rewriten-1}. As we know, in a simply connected domain, a smooth irrotational vector field is a grad field. Although two-dimensional exterior domain is not simple connected, Kozono and his collaborators has established similar results in \cite{hieber2021helmholtz}.
\begin{lemma}[\label{lemma-harmonic}see \cite{hieber2021helmholtz}] The following equations
\begin{numcases}{}
\nonumber \nabla \cdot \textbf{u} = 0 & $\text{in } \Omega $, \label{harmonic-1}\\
\nonumber \nabla^\perp \cdot \textbf{u} = 0 & $\text{in } \Omega $, \label{harmonic-2}\\
\nonumber \textbf{u} \cdot \nu = 0 & $\text{on } \Gamma$ \label{harmonic-3}
\end{numcases}
only has zero solution in $L^2(\Omega) \,space$.
\end{lemma}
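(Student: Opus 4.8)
The plan is to reduce this vector statement to a scalar uniqueness statement for a harmonic stream function, and then close the argument with a Green identity whose only nontrivial input is the decay of the potential at infinity. First I would record that a field $\bm u\in L^2(\Omega)$ with $\nabla\cdot\bm u=0$ and $\nabla^\perp\cdot\bm u=0$ in the distributional sense is automatically smooth in $\Omega$: componentwise the two conditions are exactly the Cauchy--Riemann equations for $u_1-\mathrm i u_2$, so each $u_j$ is harmonic, and elliptic regularity up to the smooth curve $\Gamma$ makes the trace $\bm u\cdot\bm\nu|_\Gamma$ meaningful. Since $\Omega$ is the exterior of a single Jordan curve, $H_1(\Omega)$ is generated by one loop $C$ around $\mathcal O$, and the divergence-free condition together with $\bm u\cdot\bm\nu=0$ on $\Gamma$ forces the flux $\oint_C\bm u\cdot\bm\nu\dif s$ to vanish (it equals the flux across $\Gamma$ by the divergence theorem applied in the region between the curves). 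This is precisely the condition ensuring that the closed $1$-form $u_2\dif x_1-u_1\dif x_2$ has vanishing periods, so there is a \emph{single-valued} $\psi$ with $\bm u=\nabla^\perp\psi$. The curl-free condition then reads $\Delta\psi=0$, and $\bm u\in L^2(\Omega)$ is the same as $\nabla\psi\in L^2(\Omega)$.

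Next I would translate the boundary condition. On $\Gamma$ one has $\bm u\cdot\bm\nu=\nabla^\perp\psi\cdot\bm\nu=\partial_{\bm\tau}\psi$ (the tangential derivative), so $\bm u\cdot\bm\nu=0$ says that $\psi$ is constant along the connected curve $\Gamma$; subtracting that constant I may assume $\psi=0$ on $\Gamma$. Thus everything reduces to showing that a harmonic $\psi$ on $\Omega$ with $\psi|_\Gamma=0$ and $\nabla\psi\in L^2(\Omega)$ must be constant.

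The heart of the matter, and the step I expect to be the main obstacle, is the behaviour of $\psi$ at infinity. On an exterior neighbourhood $\{|x|>R_0\}$ the harmonic function $\psi$ admits a Laurent-type expansion $\psi=a_0+b_0\log r+\sum_{k\ge1}r^{\pm k}(\cdots)$, and the requirement $\nabla\psi\in L^2$ rules out the logarithmic mode (since $|\nabla(\log r)|^2\sim r^{-2}$ is not integrable at infinity) as well as every growing mode $r^{k}$ with $k\ge1$, leaving $\psi=a_0+O(r^{-1})$ and $\nabla\psi=O(r^{-2})$. The vanishing of $b_0$ is equivalent to $\int_\Gamma\partial_{\bm\nu}\psi\dif s=0$, and also makes $\int_{\partial B(0,R)}\partial_r\psi\dif s=0$ for all large $R$. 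Turning this formal expansion into rigorous pointwise decay is where the $L^2$ hypothesis and the exterior geometry genuinely enter, and is the only delicate point of the proof.

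With the decay established the conclusion is immediate. Writing $\psi_1=\psi$ (already normalized so that $\psi_1=0$ on $\Gamma$) and applying Green's identity on $\Omega\cap B(0,R)$ gives
\[
\int_{\Omega\cap B(0,R)}|\nabla\psi|^2\dif x=\int_{\partial B(0,R)}\psi_1\,\partial_r\psi_1\dif s-\int_\Gamma \psi_1\,\partial_{\bm\nu}\psi_1\dif s .
\]
The $\Gamma$-integral vanishes because $\psi_1=0$ there, while on $\partial B(0,R)$ the splitting $\psi_1=a_0+O(R^{-1})$ together with $\int_{\partial B(0,R)}\partial_r\psi_1\dif s=0$ leaves only a remainder of size $O(R^{-1})\cdot O(R^{-2})\cdot R=O(R^{-2})$, which tends to $0$ as $R\to\infty$. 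Hence $\int_\Omega|\nabla\psi|^2\dif x=0$, so $\nabla\psi\equiv0$ and therefore $\bm u=\nabla^\perp\psi\equiv0$, which is the assertion of the lemma.
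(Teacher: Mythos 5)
Your argument is essentially correct, but note first that the paper does not prove this lemma at all: it is quoted directly from the cited work of Kozono and collaborators \cite{hieber2021helmholtz}, where it is part of the Helmholtz--Weyl decomposition theory for two-dimensional exterior domains. Your proof is therefore a genuinely different, self-contained route: reduce to a single-valued stream function $\psi$ (single-valuedness is exactly where $\nabla\cdot\bm{u}=0$ and $\bm{u}\cdot\bm{\nu}|_\Gamma=0$ enter, through the vanishing of the flux period around the obstacle), expand the harmonic $\psi$ at infinity and use $\nabla\psi\in L^2$ to kill the $\log r$ and growing modes, then close with a Green identity. This buys transparency: one sees which hypothesis excludes which potential counterexample --- the boundary condition rules out fields like $\nabla(r^{-1}\cos\theta)$, while the $L^2$ condition at infinity is what rules out the point vortex $\nabla^\perp\log|x|$, which is divergence-free, irrotational and tangent to a circular boundary. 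The citation, by contrast, buys brevity and embeds the statement in a general decomposition theory.

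Two points in your write-up should be tightened. First, the trace $\bm{u}\cdot\bm{\nu}|_\Gamma$ is \emph{not} meaningful by ``elliptic regularity up to $\Gamma$'': interior harmonicity gives no regularity at the boundary whatsoever, since there is no boundary condition to bootstrap. The correct (and standard) justification is the normal-trace theory for $L^2$ fields with $L^2$ divergence, i.e.\ the $H(\mathrm{div})$ trace in $H^{-1/2}(\Gamma)$ --- this is also how the paper's space $L^2_\sigma$ is defined --- and the same weak trace is what legitimizes your divergence-theorem step in the region between $\Gamma$ and the loop $C$, as well as the boundary term in the Green identity, where $\partial_{\bm\nu}\psi\in H^{-1/2}(\Gamma)$ pairs against $\psi|_\Gamma\in H^{1/2}(\Gamma)=0$. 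Second, the expansion step you flag as ``the only delicate point'' is in fact routine once phrased holomorphically: $f:=\partial_1\psi-\mathrm i\,\partial_2\psi$ is holomorphic on $\{|x|>R_0\}$, and Parseval applied on circles to its Laurent series $\sum_n c_n z^n$ shows that $\iint|f|^2\dif x<\infty$ forces $c_n=0$ for all $n\geqslant -1$; hence $|\nabla\psi|=O(r^{-2})$, $\psi$ has a finite limit at infinity, and your final limit $R\to\infty$ in the Green identity goes through exactly as you wrote it.
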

\begin{cor}\label{remark-harmonic}
 Suppose that $\bm{u} \in (L^2(\Omega))^2$ is irrotational, then there exists a scalar function $p \in L^2_{loc}(\Omega)$ such that $\bm{u} = \nabla p$.
\end{cor}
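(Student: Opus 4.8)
The statement is a corollary of Lemma \ref{lemma-harmonic}, so the plan is to reduce the irrotational field $\bm{u}$ to a harmonic field to which that lemma applies. The natural device is the Helmholtz--Weyl decomposition of $L^2$ on the exterior domain $\Omega$, which is precisely the circle of ideas underlying \cite{hieber2021helmholtz} and \cite{galdi2011introduction}. Concretely, I would invoke the $L^2$-orthogonal decomposition
\[
(L^2(\Omega))^2 = L^2_\sigma(\Omega) \oplus G(\Omega), \qquad G(\Omega) := \{\nabla p : p \in L^2_{loc}(\Omega),\ \nabla p \in (L^2(\Omega))^2\},
\]
and write $\bm{u} = \bm{w} + \nabla p$ with $\bm{w} \in L^2_\sigma(\Omega)$ and $\nabla p \in G(\Omega)$.

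By the definition of $L^2_\sigma(\Omega)$, the solenoidal part $\bm{w}$ is divergence-free and satisfies $\bm{w}\cdot\bm{\nu}|_\Gamma = 0$; it remains only to check that it is also curl-free. For this I would note that every gradient is irrotational in the distributional sense, $\nabla^\perp\cdot\nabla p = -\partial_2\partial_1 p + \partial_1\partial_2 p = 0$ in $\mathcal{D}'(\Omega)$, so that $\nabla^\perp\cdot\bm{w} = \nabla^\perp\cdot\bm{u} - \nabla^\perp\cdot\nabla p = 0$, using the hypothesis that $\bm{u}$ is irrotational. Consequently $\bm{w}\in (L^2(\Omega))^2$ satisfies all three conditions of Lemma \ref{lemma-harmonic}: $\nabla\cdot\bm{w}=0$, $\nabla^\perp\cdot\bm{w}=0$, and $\bm{w}\cdot\bm{\nu}|_\Gamma=0$. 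The lemma then forces $\bm{w} = 0$, whence $\bm{u} = \nabla p$ with $p \in L^2_{loc}(\Omega)$, which is the assertion.

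The main obstacle is not the final three-line deduction but the justification of the decomposition itself in this unbounded, non-simply-connected setting, together with the requisite trace so that $\bm{w}\cdot\bm{\nu}|_\Gamma$ is meaningful. I would lean on the exterior-domain Helmholtz--Weyl theory (the references cited above), where the space $G(\Omega)$ is taken with potentials in the homogeneous class $\dot{\bm{H}}(\Omega)$ introduced in Section \ref{notation}; this is exactly the regularity in which $\nabla p \in L^2$ while $p$ need only be locally $L^2$, matching the conclusion. A secondary point to handle carefully is that the curl identity for $\nabla p$ is only distributional, so if one prefers an elementary argument it may be cleaner to test $\bm{u}$ against $\nabla^\perp\varphi$ for $\varphi\in C_0^\infty(\Omega)$ and pass through a mollification of $p$; either way, the topological circulation obstruction that would ordinarily break such a claim is automatically absent here, because the offending harmonic field $\nabla^\perp \log|x|$ fails to be square-integrable at infinity, which is the analytic content of Lemma \ref{lemma-harmonic}.
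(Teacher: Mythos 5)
Your proposal is correct and follows essentially the same route as the paper's own proof: decompose $\bm{u} = \bm{w} + \nabla p$ via the Helmholtz--Weyl decomposition on the exterior domain, observe that the solenoidal part $\bm{w}$ is then divergence-free, curl-free, and tangential on $\Gamma$, and invoke Lemma \ref{lemma-harmonic} to conclude $\bm{w} = 0$. Your write-up merely spells out in more detail the points the paper leaves implicit (the distributional identity $\nabla^\perp\cdot\nabla p = 0$ and the validity of the decomposition with potentials in $\dot{\bm{H}}(\Omega)$).
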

\begin{proof}
 Indeed, from the classical Helmholtz decomposition, we have
\begin{align}
\bm{u} = \bm{v} + \nabla p
\end{align}
for some $\bm{v} \in L^2_{\sigma}$ and $p \in \dot{\bm{H}}(\Omega)$. Recalling that $\bm{u}$ is irrotational, it follows $\nabla^\perp \cdot \bm{v} = 0$, Lemma $\ref{lemma-harmonic}$ then implies $\bm{v} =0$.
\end{proof}

\section{Well-posedness of the approximate equations}
In this section, we will prove Proposition \ref{proposition-1}. For clarity's sake, we omit the superscript $n$  and subscript $n$  in \eqref{second-approximate-1}--\eqref{second-approximate-5} and rewrite them as:
\begin{numcases}{}
  \partial_t \bm{v} + \bm{u} \cdot {\nabla} \bm{v} + (\nabla \bm{u})^t \cdot \bm{v} + \nu \bm{v} - \nu \vartheta \bm{u} + \nabla p = 0   &  $\text{in}  \ \Omega \times (0, T), $ \label{second-approximate-1-rewritten}  \qquad \    \\
    \text{div} \ \bm{u} = 0 &  $\text{in} \ \Omega \times [0, T) $ \label{second-approximate-2-rewritten},\\
    \bm{u} = 0  & $\text{on} \ \Gamma \times [0, T] \label{second-approximate-3-rewritten}$,\\
    \bm{u}|_{t=0} = \bm{u}_0 & $\text{in} \ \Omega $\label{second-approximate-4-rewritten},\\
    \bm{u} (x, t) \to 0 & $\forall t \in [0, T), |x| \to \infty $ \label{second-approximate-5-rewritten}.
\end{numcases}

In view of Dirichlet boundary conditions, we could establish uniform estimate of $\bm{H}^1(\Omega)$ norm of $\bm{u}$ from the above equations directly. Indeed, multiplying $(\ref{second-approximate-1-rewritten})$ by $\bm{u}$ and integrating over $\Omega$, we obtain
\begin{align}
\nonumber\frac{1}{2}\frac{\dif} {\dif t}\|\bm{u}\|_{\bm{H}^1(\Omega)}^2 &= -\nu\|\nabla\bm{u}\|_{\bm{L}^2(\Omega)}^2 - \nu(1 - ||\vartheta||_{L^\infty})\|\bm{u}\|_{\bm{L}^2(\Omega)}^2 \leqslant 0,
\end{align}
which immediately yields
\begin{align}\label{bar_1_low_estimate}
\|\bm{u}(t)\|_{L^\infty([0, T]; \bm{H}^1(\Omega))} \leqslant \|\bm{u}_0^n\|_{\bm{H}^1(\Omega)}.
\end{align}

Observing that equations \eqref{second-approximate-1-rewritten}--\eqref{second-approximate-5-rewritten} are nonlinear and involve high order derivatives, it's not easy to obtain a priori estimate of high order derivatives of $\bm{u}$ directly. We therefore consider instead the following linearized vorticity stream formula:
\begin{numcases}{}
     \partial_t q + \bm{u} \cdot \nabla{q}  + \nu q - \nu \nabla^{\perp}\cdot (\vartheta\bm{u})= 0 & $ \text{in } \,\Omega \times [0, T]$ , \label{construction-equation-1} \\
     q|_{t=0} = q_0 & $ \text{in } \,  \Omega$,  \label{construction-equation-2}\\
     \Delta_x {\psi}(x, t) = {q}(x, t) & $ \text{in } \,\Omega \times [0, T]$,  \label{construction-equation-3}\\
     {\psi}(x, t) = 0 & $ \text{on } \, \Gamma \times [0, T]$ ,  \label{construction-equation-4}\\
      {\tilde{\bm{u}}}(x, t) + \bm{A}\tilde{\bm{u}}(x, t) = \nabla^{\perp}{\psi}(x, t) &  $\text{in } \Omega \times [0, T]$,   \label{construction-equation-5}\\
       \tilde{\bm{u}}(x, t) = 0 & $\text{on } \Gamma \times [0, T]$,   \label{construction-equation-6}
\end{numcases}
where $q_0 = \nabla^\perp \cdot (\bm{u}_0 - \Delta \bm{u}_0)$, and $\bm{A} = P(-\Delta)$ is the Stokes operator. We define mapping $\mathcal{F}:C([0, T_0]; V) \rightarrow C(0, T_0]; V)$ as $\mathcal{F}(\bm{u}) = \tilde{\bm{u}}$, the domain of which is given by
\begin{align}
\nonumber
 D(\mathcal{F}) := \{ \bm{u} \in C([0, T_0]; V \cap \bm{H}^3(\Omega) \ \big  | \ \|\bm{u}\|_{L^\infty([0, T_0]; \bm{H}^3(\Omega))} \leqslant M, \bm{u}|_{t=0} = \bm{u}_0\},
 \end{align}
where $ T_0 \in (0, T], M > 0$ are both determined later.
 Observe that the above equations becomes to be the vorticity stream formula of equations \eqref{second-approximate-1-rewritten}--\eqref{second-approximate-5-rewritten} provided that $\bm{u}$ is the fixed point of mapping $\mathcal{F}$, and as a result, $\bm{u}$ is exactly the solution to the approximate equations.

In the sequel of this section,  we will first show $\mathcal{F}$ is a contraction mapping, so we can use Banach fixed point theorem to show that there exists a fixed point of mapping $\mathcal{F}$. Next, we will prove the fixed point $\bm{u}$ is exactly the unique local solution to equations $(\ref{second-approximate-1-rewritten})$--$(\ref{second-approximate-5-rewritten})$. At last, the solution will be proved to be globally existent.

\begin{proof}[Proof of proposition $\ref{proposition-1}$]

$\bm{Step \ 1}$:  We begin to show that $\mathcal{F}$ is well-defined. For convenience, we will make use of the following norm
\begin{align}
\|\cdot \|_{L^{1}(\Omega)\cap L^2(\Omega)} := \|\cdot\|_{L^1(\Omega)} +\|\cdot\|_{L^2(\Omega)}.
\end{align}
Suppose $\bm{u}\in D(\mathcal{F})$, let us first consider equations \eqref{construction-equation-1}--\eqref{construction-equation-2}, which are linear transport equations. From Lemma $\ref{lemma-transport}$, we know that there exists a unique weak solution ${q} \in C([0, T_0]; L^1(\Omega) \cap L^2(\Omega))$, which admits the following estimate
\begin{equation}\label{pf-transport}
\| ({q(t)} \|_{L^1(\Omega)\cap L^2(\Omega)}) \leqslant C_{T}(\| q_0 \|_{L^1(\Omega) \cap L^2(\Omega)} + T_0^\frac{1}{2}\|\bm{u}\|_{L^\infty([0, T_0]; \bm{H}^1(\Omega))})
\end{equation}
for $\forall \ t \in [0, T_0]$, where $C_{T}$ is a constant depends on $T$, $\Omega$ and $\nu$. Moreover, observing that $\theta$ and $q_0$ are both supported in disk $B(0, R)$ for some $R >0$ large enough,  therefore property (ii) of Lemma \ref{lemma-transport} tells that for $t \in [0, T_0]$,
\begin{equation}\label{pf-transport-1}
\delta(\supp \, q(t))  < R + C \int_0^t \|\bm{u}(\cdot, s)\|_{\bm{H}^2(\Omega)} \dif s.
\end{equation}
We then consider equations (\ref{construction-equation-3})--(\ref{construction-equation-4}), which are Poisson equations for fixed $t\in[0, T_0]$. It follows from Lemma $\ref{lemma-poisson}$ that there exists a unique solution ${\psi} \in \dot{\bm{H}}(\Omega)$ satisfies the following property:
\begin{equation}\label{pf-poisson}
\| \nabla {\psi}(t) \|_{\bm{H}^1(\Omega)} \leqslant C\delta(\supp \, q(t)) (\|q(t)\|_{L^1(\Omega) \cap L^2(\Omega)}).
\end{equation}
Now let us move on to equations \eqref{construction-equation-5}--\eqref{construction-equation-6}. Since they are exactly the stationary Stokes equations for fixed $t \in[0, T_0]$, we know from Lemma $\ref{lemma-stokes}$ that the solution $\bm{\tilde{u}}$ uniquely exists and obeys
\begin{equation}\label{pf-tmp-stokes}
\begin{aligned}
\|\bm{\tilde{u}}(t)\|_{\bm{H}^3(\Omega)} &\leqslant C \|\nabla^\bot {\psi}(t)\|_{\bm{H}^1(\Omega)}.
\end{aligned}
\end{equation}
Collecting $(\ref{pf-transport})$--$(\ref{pf-tmp-stokes})$ gives that for arbitrary $t\in[0, T_0]$, the following estimate
\begin{equation}\label{pf-stokes}
\begin{aligned}
\|\tilde{\bm{u}}(t)\|_{\bm{H}^3(\Omega)} \leqslant C_{T} (R + \int_0^t \|&\bm{u}(\cdot, s)\|_{\bm{H}^2(\Omega)} \dif s)(\|q_0\|_{L^1(\Omega)\cap L^2(\Omega)}+ T_0^\frac{1}{2}\|\bm{u}\|_{L^\infty([0, T_0]; \bm{H}^1(\Omega))})
\end{aligned}
\end{equation}
holds.  It follows that $\tilde{\bm{u}}$ belongs to $C([0, T]; \bm{H}^3(\Omega) \cap V)$, and as a result, mapping $\mathcal{F}$ is well-defined.

$\bm{Step \ 2}$: As we have mentioned that we anticipate to use Banach fixed point theorem to obtain local well-posedness, we shall now determine the parameters $M$ and $T_0$ to guarantee that $\bm{\tilde{u}} \in D(\mathcal{F})$. Indeed, setting $M := \max\left(4C_{T} R\| q_0 \|_{L^1(\Omega)\cap L^2(\Omega)}, \|\bm{u}_0\|_{\bm{H}^3(\Omega)}\right)$ and $T_0 = \min \left\{\frac{R}{M}, \frac{\|q_0^n\|_{L^1(\Omega) \cap L^2(\Omega)}^2}{M^2}\right\}$, then ($\ref{pf-stokes}$) implies
\begin{equation}\label{pf-tilde-u-estimate}
 \sup_{t\in[0, T_0]}\|\bm{\tilde{u}}(t)\|_{\bm{H}^3(\Omega)} \leqslant  M.
\end{equation}
In view of the domain of mapping $\mathcal{F}$, it remains to show that $\bm{\tilde{u}}|_{t=0} = \bm{u}_0$. Indeed, From equations (\ref{construction-equation-1})$-$(\ref{construction-equation-6}), we deduce that
\begin{equation}
\begin{aligned}
\nabla^\perp \cdot (\bm{\tilde{u}} + A\bm{\tilde{u}})|_{t=0} = \Delta {\psi}|_{t=0} = q_0 = \nabla^\perp \cdot(\bm{u}_0 + A\bm{u}_0).
\end{aligned}
\end{equation}
Besides, observing that $\bm{u}$ and $\bm{\tilde{u}}$ are both divergence-free, we immediately obtain
\begin{equation}
\begin{aligned}
\nabla \cdot (\bm{\tilde{u}} + A\bm{\tilde{u}})|_{t=0} = \nabla \cdot (\bm{u}_0 + A\bm{u}_0) = 0.
\end{aligned}
\end{equation}
Collecting the above two equalities, it follows from Lemma $\ref{lemma-harmonic}$ that
\begin{equation}
\begin{aligned}
(\bm{\tilde{u}}|_{t=0} -\bm{u}_0) + A(\bm{\tilde{u}}|_{t=0} - \bm{u}_0) = 0,
\end{aligned}
\end{equation}
which, in turn, yields $\bm{\tilde{u}}|_{t=0} = \bm{u}_0$ by Lemma \ref{lemma-stokes}. We thus have proved that $\tilde{\bm{u}} \in D(\mathcal{F})$.

$\bm{Step \ 3}$: We then prove that $\mathcal{F}$ is a contraction mapping. First of all, we assert that ($\bm{u}$, $\bm{\tilde{u}}$) satisfies the following equations
\begin{numcases}{}
    \partial_t \bm{\tilde{v}} + \bm{u} \cdot {\nabla} \bm{\tilde{v}} - (\nabla \bm{\tilde{v}})^t \bm{u} + \nu \tilde{\bm{v}} - \nu \vartheta\bm{u} + \nabla \tilde{p} = 0  &  $\text{in} \ \Omega \times (0, T_0] $ \label{pf-euler-alpha-1}\\
    \text{div} \ \bm{\tilde{u}} = 0, &  $\text{in} \ \Omega \times [0, T_0] $ \label{pf-euler-alpha-2}\\
    \bm{\tilde{u}} = 0,  & $\text{on} \ \Gamma \times [0, T_0] \label{pf-euler-alpha-3}$\\
    \bm{\tilde{u}}|_{t=0} = \bm{u}_0, & $\text{in} \ \Omega $\label{pf-euler-alpha-4}\\
    \bm{\tilde{u}} (x, t) \to 0, & $\forall t \in [0, T_0], |x| \to \infty $ \label{pf-euler-alpha-5}
\end{numcases}
where $\bm{\tilde{v}} = \bm{\tilde{u}} - \Delta \bm{\tilde{u}}$. We only need to check ($\ref{pf-euler-alpha-1}$). To this end, we define
\begin{equation}
\begin{aligned}
\nonumber \bm{Q}(t) :=  \bm{\tilde{v}}(t) -   \bm{\tilde{v}}(0) + \int_0^t \left[\bm{u}(s) \cdot {\nabla} \bm{\tilde{v}}(s)  - (\nabla \bm{\tilde{v}}(s))^t \bm{u}(s)  + \nu \tilde{\bm{v}}(s) - \nu \vartheta \bm{u}(s)\right] \dif s
\end{aligned}{}
\end{equation}
with $t \in [0, T_0]$. Observing that $\bm{u}, \bm{\tilde{u}} \in C([0, T_0]; \bm{H}^3(\Omega)\cap V)$, we have $\bm{Q}(t) \in C([0, T_0]; (L^2(\Omega))^2)$. Moreover,
\begin{equation}
\begin{aligned}
\nonumber \nabla ^\perp \cdot \bm{Q}(t) &= \nabla_x^\perp \cdot \big[\bm{\tilde{v}}(t) -   \bm{\tilde{v}}(0) + \int_0^t \left[\bm{u}(s) \cdot {\nabla} \bm{\tilde{v}}(s)  - (\nabla \bm{\tilde{v}}(s))^t \bm{u}(s)  + \nu \tilde{\bm{v}} (s)- \nu \vartheta_{n} \bm{u}(s)\right] \dif s\big] \\
&=\int_0^t \left[ \partial_s {q}(s) + \bm{u} \cdot \nabla_x{{q}(s)} + \nu q(s)  - \nu \nabla^{\perp}\cdot (\vartheta_{n}\bm{u}(s))\right] \dif s \\
&= 0,
\end{aligned}
\end{equation}
where we have used identity $(\ref{construction-equation-1})$. In view of Corollary \ref{remark-harmonic}, it follows that there exists a scalar function $\tilde{P} \in C([0, T_0];\bm{\dot{H}}(\Omega))$ such that
\begin{equation}
\begin{aligned}\label{q-equation}
 \bm{Q} = \nabla_x \tilde{P}(t).
\end{aligned}
\end{equation}
 By differentiating the above identity in t and setting $\tilde{p}(t, x):= \partial_t \tilde{P}(t, x)$, we immediately obtains \eqref{pf-euler-alpha-1}.

We now begin energy estimating via equations $(\ref{pf-euler-alpha-1})$--$(\ref{pf-euler-alpha-5})$. Suppose $\bm{u^1}, \bm{u^2} \in C([0, T_0]; V\cap\bm{H}^3(\Omega))$, we denote
\begin{equation}
\begin{aligned}
\bm{\tilde{u}}^1 = \cal{F}[{\bm{u^1}}],\\
 \bm{\tilde{u}}^2 = \cal{F}[\bm{u}^2],\\
 \bm{\tilde{v}}^1 = \bm{\tilde{v}}^1 -\Delta\bm{\tilde{u}}^1\\ \bm{\tilde{v}}^2 = \bm{\tilde{u}}^1 - \Delta\bm{\tilde{u}}^2,\\
W = \bm{u^1} - \bm{u^2},\\
S = \bm{\tilde{u}}^1 - \bm{\tilde{u}}^2.
\end{aligned}
\end{equation}
It is easy to see that
\begin{align}
\nonumber \bm{\tilde{v}}^1 - \bm{\tilde{v}}^2 = S - \Delta S.
\end{align}
Subtracting  equation ($\ref{pf-euler-alpha-1})$ for $\bm{\tilde{u}}^2$ from that for $\bm{\tilde{u}}^1$, it follows
\begin{equation}
\begin{aligned}
\partial_t(S-\Delta S) &= -\bm{u^1} \cdot \nabla \bm{\tilde{v}}^1 + \bm{u^2} \cdot \nabla \bm{\tilde{v}}^2 - \sum_{j=1}^2 \bm{u^1}_j \cdot \nabla \bm{\tilde{v}}^1_j + \sum_{j=1}^2 \bm{u^2}_j \cdot \nabla \bm{\tilde{v}}^2_j \\
&\quad \,- \nabla (\tilde{p}^1 - \tilde{p}^2) -\nu(\tilde{\bm{v}}^1 -\tilde{\bm{v}}^2) + \nu \vartheta(\bm{u}^1 - \bm{u}^2).
\end{aligned}
\end{equation}
Multiplying the above equation with $S$ and integrating over $\Omega \times [0, t)$ for arbitrary $t \in (0, T_0]$, we obtain
\begin{equation}\label{pf-K}
\begin{aligned}
\frac{1}{2}\left(\|S(t)\|_{L^2}^2 + \|\nabla S(t)\|_{L^2}^2\right) &= -\int_0^t\int_{\Omega} S \cdot \left[ (W\cdot \nabla) \bm{\tilde{v}}^1 + (\bm{u^2}\cdot \nabla) (S - \Delta S)\right] \dif x \dif s\\
&\quad-\int_0^t\int_{\Omega} S \cdot \left[ \sum_{j=1}^2 \bm{u}_j^1\nabla (S_j - \Delta S_j) + \sum_{j=1}^2 W_j \nabla \bm{\tilde{v}}_j^2\right] \dif x \dif s\\
&\quad-\nu\int_0^t\int_{\Omega} S \cdot [(\tilde{\bm{v}}^1 - \tilde{\bm{v}}^2) - \vartheta(\bm{u}^1 - \bm{u}^2)]\dif x \dif s\\
&=:K_1 + K_2 + K_3,
\end{aligned}
\end{equation}
where we have used the property that $\tilde{\bm{u}}^2|_{t = 0} = \tilde{\bm{u}}^1_{t = 0} = \bm{u}_0^n$. We will examine each term in $(\ref{pf-K})$. We begin by estimating $K_1$. We note, as usual, that $(S, \bm{u^2} \cdot \nabla S) = 0$, using integration by parts, we deduce
\begin{align}
\nonumber K_1 &= -\int_0^t\int_{\Omega}S \cdot \left[ (W\cdot \nabla) \bm{\tilde{v}}^1 + (\bm{u^2}\cdot \nabla) (S - \Delta S)\right] \dif x \dif s\\
\nonumber &= -\int_0^t\int_{\Omega}S \cdot \left[ (W\cdot \nabla) \bm{\tilde{v}}^1\right] \dif x \dif s +\int_0^t\int_{\Omega}S \cdot \left[ (\bm{u^2}\cdot \nabla) (\Delta S)\right] \dif x \dif s \\
\nonumber &= -\int_0^t\int_{\Omega}S \cdot \left[ (W\cdot \nabla) \bm{\tilde{v}}^1\right] \dif x \dif s -\int_0^t\int_{\Omega}\Delta S \cdot \left[ (\bm{u^2}\cdot \nabla) S\right] \dif x \dif s\\
\nonumber&=  -\int_0^t\int_{\Omega}S \cdot \left[ (W\cdot \nabla) \bm{\tilde{v}}^1 \right] \dif x \dif s  + \int_0^t\int_{\Omega}\sum_{k, l} \left[ \partial_l u_k^2 \partial_k S + u_k^2\partial_k\partial_l S\right] \partial_l S \dif x \dif s\\
\nonumber&= -\int_0^t\int_{\Omega}S \cdot \left[ (W\cdot \nabla) \bm{\tilde{v}}^1 \right] \dif x \dif s  + \int_0^t\int_{\Omega}\sum_{k, l} \left[ \partial_l u_k^2 \partial_k S\right] \partial_l S \dif x \dif s,
\end{align}
then taking account of ($\ref{pf-tilde-u-estimate}$) and using H\"older inequality and Gagliardo–Nirenberg interpolation inequality, it follows
\begin{align}
\nonumber |K_1| &\leqslant \int_0^t \left[\|S\|_{L^4} \|W\|_{L^4} \|\nabla \bm{\tilde{v}}^1\|_{L^2} +  \|\nabla \bm{u^2}\|_{L^\infty}\|\nabla S\|_{L^2}^2\right] \dif s\\
&\leqslant CM \int_0^t\left[\|S\|_{\bm{H}^1(\Omega)}^2 + \|W\|_{\bm{H}^1(\Omega)}^2\right] \dif s \label{pf-K_1}.
\end{align}
 We now estimate the second term $K_2$, which is similar to $K_1$. Indeed, by integration by parts, $K_2$ can be rewrited as
\begin{align}
\nonumber K_2 &= -\int_0^t\int_{\Omega} S \cdot \left[ \bm{u}_j^1\nabla (S_j -  \Delta S_j) +  W_j \nabla \bm{\tilde{v}}_j^2\right] \dif x \dif s \\
\nonumber &= -\int_0^t\int_{\Omega} \left[S_i \bm{u}_j^1\partial_i S_j -  S_i \bm{u}_j^1\partial_i(\Delta S_j) +  S_i W_j \partial_i \bm{\tilde{v}}_j^2 \right]\dif x \dif s\\
\nonumber &= -\int_0^t\int_{\Omega} \left[S_i \bm{u}_j^1\partial_i S_j  +  S_i \partial_i \bm{u}_j^1 \Delta S_j +  S_i W_j \partial_i \bm{\tilde{v}}_j^2 \right] \dif x \dif s \\
\nonumber &= -\int_0^t\int_{\Omega} \left[S_i\bm{u}_j^1 \partial_i S_j - \partial_k \partial_l \bm{u^1}_jS_l\partial_k S_j\right] \dif x \dif s \\
\nonumber &\quad+ \int_0^t\int_{\Omega}\left[- \partial_l \bm{u^1}_j\partial_kS _l\partial_k S_j +  \partial_i \bm{\tilde{v}}^2_j S_i W_j\right] \dif x \dif s,
\end{align}
again we use inequality ($\ref{pf-tilde-u-estimate}$), H\"older inequality and Gagliardo–Nirenberg interpolation inequality to deduce that
\begin{align}
\nonumber |K_2| &\leqslant \int_0^t \left[\|S\|_{L^2}\|\bm{u^1}\|_{L^\infty}\|\nabla S\|_{L^2} + \|\nabla^2\bm{u^1}\|_{L^4}\|S\|_{L^4}\|\nabla S\|_{L^2} \right] \dif s \\
\nonumber &\quad + \int_0^t \left[\|\nabla \bm{u^1}\|_{L^\infty}\|\nabla S\|_{L^2}^2  + \|S\|_{L^4}\|W\|_{L^4}\|\nabla \bm{\tilde{v}}^2\|_{L^2}\right] \dif s\\
&\leqslant CM \int_0^t \left[\|S\|_{\bm{H}^1(\Omega)}^2 + \|W\|_{\bm{H}^1(\Omega)}^2\right] \dif s \label{pf-K_2}.
\end{align}
We are left to estimate the last term $K_3$.  By integration by parts, we find that
\begin{align}
\nonumber K_3 &= -\nu\int_0^t\int_{\Omega} S \cdot [(\tilde{\bm{v}}^1 - \tilde{\bm{v}}^2) - \vartheta(\bm{u}^1 - \bm{u}^2)]\dif x \dif s\\
\nonumber &=- \nu \int_0^t \|S\|_{\bm{H}^1(\Omega)}^2 \dif s+ \nu \int_0^t \int_\Omega \vartheta S \cdot W \dif x \dif s,
\end{align}
using H\"older inequality and Young inequality, it follows that
\begin{align}
|K_3| &\leqslant  C\int_0^t  \|W\|_{\bm{H}^1(\Omega)}^2 \dif s \label{pf-K_3},
\end{align}
where $C$ depends on $\nu$. Collecting (\ref{pf-K})-- (\ref{pf-K_3}), we find that for all $t \in [0, T_0]$
\begin{equation}
\begin{aligned}
\|S(t)\|_{\bm{H}^1(\Omega)}^2 \leqslant CM \int_0^t \left[\|S\|_{\bm{H}^1(\Omega)}^2 + \|W\|_{\bm{H}^1(\Omega)}^2\right] \dif s,
\end{aligned}
\end{equation}
where M is defined in step 2 and $C$ only depends on $\Omega$, $\nu$. Thanks to Gr\"onwall inequality, we have
\begin{equation}
\begin{aligned}
\|S(t)\|_{\bm{H}^1(\Omega)}^2 \leqslant CMe^{CMt}\int_0^t\|W(s)\|_{\bm{H}^1(\Omega)}^2 \dif s.
\end{aligned}
\end{equation}
The above inequality implies for $0 < h \leqslant T_0$
\begin{equation}
\begin{aligned}\label{pf-estimate-s}
\sup_{t\in[0, h]} \|S(t)\|_{\bm{H}^1(\Omega)}^2 \leqslant CMe^{CMh}h\sup_{t\in[0, h]}\|W(t)\|_{\bm{H}^1(\Omega)}^2.
\end{aligned}
\end{equation}
Consequently, if we choose $h$ small enough, such that $CMe^{CMh}h < 1$, then it follows that $\cal{F}$ is a contraction mapping with respect to the $\bm{H}^1(\Omega)$-norm.

$\bm{Step \ 4}$: Local well-posedness on $[0, h]$. By Banach fixed point theorem in metric spaces, we could conclude there exists a unique fixed point $\bm{u}\in C([0, h]; V)$. This fixed point is also the limit of the fixed point iteration, with $\bm{u}^0 := \bm{u}_0$ and $\bm{u}^n := \cal{F}[\bm{u}^{n-1}]$. From $(\ref{pf-tilde-u-estimate})$, we know that $\bm{u}^n$ is uniformly bounded in $L^\infty([0, h];\bm{H}^3(\Omega))$, thanks to  Banach-Alaoglu theorem we have that there exists a subsequence $\bm{u}^{n_k}$ converges, weak-star to $\bm{u}$ in  $L^\infty([0, h]; \bm{H}^3(\Omega))$. It's easy to see that $\bm{u} \in C([0, h]; \bm{H}^3(\Omega))$. Indeed, since $\cal{F}[\bm{u}] \in C([0, h]; \bm{H}^3(\Omega)\cap V)$ and $\bm{u}$ is the fixed point, we have $\bm{u} := \cal{F}[\bm{u}] \in C([0, h]; \bm{H}^3(\Omega)\cap V)$.

Now we are going to show that the fixed point $\bm{u}$ satisfies equations \eqref{second-approximate-1-rewritten}--\eqref{second-approximate-5-rewritten}. Indeed, since $\bm{u}$ is the fixed point of $\mathcal{F}$, substituting $\tilde{\bm{u}}$ by $\bm{u}$ in the equations \eqref{pf-euler-alpha-1}--\eqref{pf-euler-alpha-5} immediately tells that $\bm{u}$  satisfies \eqref{second-approximate-1-rewritten}--\eqref{second-approximate-5-rewritten}.

$\bm{Step \ 5}$: Extend to [0, T].
Observing that $\bm{u}$ is the fixed point of $\mathcal{F}$, it follows from $(\ref{pf-stokes})$ and $(\ref{bar_1_low_estimate})$
\begin{align}
\|\bm{u}(\cdot, t)\|_{\bm{H}^3(\Omega)} \leqslant C(T, \|\bm{u}_0^n\|_{\bm{H}^3(\Omega)})(R + \int_0^t \|\bm{u}(\cdot, s)\|_{\bm{H}^3(\Omega)} \dif s).
\end{align}
Thanks to Gr\"onwall inequality, we arrive at
\begin{align}
\|\bm{u}(\cdot, t)\|_{\bm{H}^3(\Omega)} \leqslant C(T, \|\bm{u}_0^n\|_{\bm{H}^3(\Omega)}) R.
\end{align}
With the above estimate at hand, we can prove by contradiction that the solution exists on $[0, T]$. Since $T$ is arbitrary, the solution is global.
\end{proof}
\begin{remark} \label{remark-propostion-1}Let $s \geq 4$. Using the arguments in the construction process of $\mathcal{F}$ and combing with \eqref{high-transport-estimate}, one can show that $\bm{u} \in L^\infty([0, T]; \bm{H}^s(\Omega) \cap V)$ provided that $\bm{u}_0 \in \bm{H}^s(\Omega)$. Indeed, we observe that $\bm{u}$ is the fixed point of $\mathcal{F}$, Lemma $\ref{lemma-stokes}$ implies that
\begin{align}
\nonumber\|\bm{u}\|_{\bm{H}^s(\Omega)} \leqslant C \|\nabla \psi\|_{\bm{H}^{s-1}(\Omega)}.
\end{align}
Collecting with $\eqref{high-transport-estimate}$, $\eqref{esimate-du-m}$ and the above inequality, it follows
\begin{align}
\nonumber\|\bm{u}\|_{L^\infty([0, T]; \bm{H}^s(\Omega))} \leqslant C(\|\bm{u}\|_{L^\infty([0, T]; \bm{H}^{s-1}(\Omega))}, T)\|\bm{u}_0\|_{\bm{H}^s(\Omega)}.
\end{align}
By induction arguments, we have $\bm{u} \in L^\infty([0, T]; \bm{H}^s(\Omega))$.
\end{remark}

\section{Global well-posedness of second-grade equations}

Formally, let $n \rightarrow \infty$, the approximate equations \eqref{second-approximate-1}--\eqref{second-approximate-5} turn back to original equations \eqref{second-grade-rewriten-1}--\eqref{second-grade-rewriten-5}. In this section, the convergence would be verified by compactness arguments.
\begin{proof}[Proof of theorem $\ref{theorem-1}$]
Let $T > 0$ be arbitrary, and suppose $\bm{u}_0 \in V \cap \bm{H}^s(\Omega)$ for some $s \geq 3$. Let $\lbrace \bm{u}^n_0\rbrace$ be the approximate sequence constructed in Lemma $\ref{approximate-lemma}$ such that $\delta(\supp \, \bm{u}^n_0) < n$. For each fixed $n\in\mathbb{N}^+$, Proposition $\ref{proposition-1}$ and Remark \ref{remark-propostion-1} tell us there exists a unique solution $\bm{u}^n \in L^\infty([0, T]; \bm{H}^s(\Omega)) \cap C([0, T]; \bm{H}^3(\Omega) \cap V)$ to equations \eqref{second-approximate-1}--\eqref{second-approximate-5}. That is, identity \eqref{energy-formula-approximate} holds for arbitrary $\varphi \in C_0^\infty([0, T); \mathcal{D})$. We now prove $\bm{u}^n$ converges to an unique solution $\bm{u}$ of the second-grade fluid equations \eqref{second-grade-rewriten-1}--\eqref{second-grade-rewriten-5} as $n\rightarrow \infty$.

 Let us first show $\bm{u}^n$ is uniformly bounded in $L^\infty([0, T]; \bm{H}^s(\Omega))$. Since $\bm{u}^n$ is the fixed point of $\mathcal{F}$, we know from $(\ref{construction-equation-1})$--$(\ref{construction-equation-6})$ that there exists a pair $(q^n, \psi^n) \in C([0, T]; L^2(\Omega)) \times C([0, T]; \dot{\bm{H}}(\Omega))$  such that the following equations
\begin{numcases}{}
    \nonumber \partial_t q^n + \bm{u}^n \cdot \nabla{q^n} + \nu q^n -\nu \nabla^\perp \cdot (\vartheta_{n} \bm{u}^n) = 0 & $ \text{in } \,\Omega \times [0, T]$ , \\
    \nonumber q^n|_{t=0} = q_0^n & $ \text{in } \,  \Omega$, \\
    \nonumber \Delta_x {\psi^n}(x, t) = {q^n}(x, t) & $ \text{in } \,\Omega \times [0, T]$,\\
    \nonumber {\psi^n}(x, t) = 0 & $ \text{on } \, \Gamma \times [0, T]$, \\
    \nonumber {{\bm{u}}}^n(x, t) + \bm{A}{\bm{u}}^n(x, t) = \nabla^{\perp}{\psi^n}(x, t) &  $\text{in } \Omega \times [0, T]$,   \\
    \nonumber {\bm{u}}^n(x, t) = 0 & $\text{on } \Gamma \times [0, T]$
\end{numcases}
hold. We observe that for fixed parameter $t \in [0, T]$, $(\bm{u}^n, \nabla^\perp \psi^n)$ satisfies the stationary Stokes equations, it follows from Proposition 6 in \cite{giga2018handbook} that
\begin{equation}
\|D^{k+3}\bm{u}^n(\cdot, t)\|_{\bm{L}^2(\Omega)} \leqslant C(\|\nabla^\perp\psi^n(\cdot, t)\|_{\bm{H}^{k+1}(\Omega)} + \|\bm{u}^n(\cdot, t)\|_{\bm{L}^2(\Omega)}),
\end{equation}
where $0 \leqslant k \leqslant s$. Since $(\psi^n, q^n)$ satisfies Poisson equations, combing the estimate ($\ref{esimate-du-m}$) in Lemma \ref{lemma-poisson}  and the above inequality gives
\begin{align}
\|D^{k+3}\bm{u}^n(\cdot, t)\|_{\bm{L}^2(\Omega)} \leqslant C(\|q^n(\cdot, t)\|_{\bm{H}^k(\Omega)} + \|\nabla\psi^n(\cdot, t)\|_{\bm{L}^2(\Omega))} + \|\bm{u}^n(\cdot, t)\|_{\bm{L}^2(\Omega)}).
\end{align}
Noting that $\nabla^\perp\psi^n = \bm{u}^n + A\bm{u}^n$, therefore by Gagliardo–Nirenberg interpolation theorem, we arrive at
\begin{equation}\label{high-estimate-u}
\begin{aligned}
\|D^{k+3}\bm{u}^n(\cdot, t)\|_{\bm{L}^2(\Omega)} &\leqslant C(\|q^n(\cdot, t)\|_{\bm{H}^k(\Omega)} + \|\bm{u}^n(\cdot, t)\|_{\bm{L}^2(\Omega)}).
\end{aligned}
\end{equation}
Let us first consider the case $k = 0$. Collecting ($\ref{transport-3}$), ($\ref{bar_1_low_estimate}$) and \eqref{high-estimate-u} gives
\begin{equation}
\begin{aligned}
\|\bm{u}^n\|_{L^\infty([0, T];\bm{H}^3(\Omega))} &\leqslant C_T\|\bm{u}^n_0\|_{\bm{H}^3(\Omega)}\leqslant C_T\|\bm{u}_0\|_{\bm{H}^3(\Omega)}.
\end{aligned}
\end{equation}
Taking into account (\ref{high-transport-estimate}) and $(\ref{high-estimate-u})$, we deduce by induction that
\begin{equation}\label{ub-u}
\begin{aligned}
\|\bm{u}^n\|_{L^\infty([0, T]; \bm{H}^s(\Omega))} &\leqslant C(T, \|\bm{u}_0\|_{\bm{H}^3(\Omega)})\|\bm{u}_0\|_{\bm{H}^s(\Omega)},
\end{aligned}
\end{equation}
where $C(T, \|\bm{u}_0\|_{\bm{H}^3(\Omega)})$ is a constant depends on $T$ and $\|\bm{u}_0\|_{\bm{H}^3(\Omega)}$ for $s \geq 4$.
By virtue of Banach-Alaoglu theorem, we find that there exists a subsequence $\bm{u}^{n_k}$ converges weak-star to some $\bm{u}$ in $L^\infty\left([0, T]; \bm{H}^s(\Omega)\right)$.

We then show that $\bm{u}^n$ converges strongly to $\bm{u}$ in $C([0, T]; \bm{H}^1(\Omega^{\prime}))$ for arbitrary $\Omega^{\prime} \subset\subset \Omega$. Indeed, for fixed $t \in [0, T]$, Rellich-Kondrachov compactness theorem implies that there exists a subsequence of $\bm{u}^{n_k}(t)$ converges strongly to $\bm{u}(t)$ in $\bm{H}^1(\Omega^{\prime})$. Since $[0, T]$ is separable, we can find a dense countable subset $\lbrace{t_l}\rbrace_{l \in \mathbb{N}}$ of $[0, T]$ such that there exists a subsequence of $\bm{u}^{n_k}$ (still denote by $\bm{u}^{n_k}$) satisfies
\begin{align}\label{ub-bk}
\bm{u}^{n_k}(t_l) \rightarrow \bm{u}(t_l) \ \ \text{ strongly in } \bm{H}^1(\Omega^{\prime})
\end{align}
for each $l \in \mathbb{N}$. On the other hand,  multiplying the equation \eqref{second-approximate-1} for $\bm{u}^n$ by $\bm{u}^n$ and integrating over $\Omega$, we have
\begin{align}
\frac{1}{2}\frac{\dif} {\dif t}\|\bm{u}^n\|_{\bm{H}^1(\Omega)}^2 &= -\nu\|\nabla\bm{u}^n\|_{\bm{L}^2(\Omega)}^2 - \nu(1 - ||\vartheta_n||_{L^\infty})\|\bm{u}^n\|_{L^2(\Omega)}^2. \label{ub-ut}
\end{align}
 Observing the right hand side of the above identity is uniformly integrable in [0, T], we conclude that $\{\bm{u}^{n_k}\}$ is equi-continuous in $C([0, T]; \bm{H}^1(\Omega^{\prime}))$, then thanks to Arzel$\grave{a}$-Ascoli theorem,  we obtain that $\bm{u}^{n_k}$ converges to $\bm{u}$ strongly in $C([0, T]; \bm{H}^1(\Omega^{\prime}))$.

 Recalling that $\Omega^{\prime}$ is arbitrary and $\bm{u}^{n_k}$ also converges weak-star to $\bm{u}$ in $L^\infty([0, T]; \bm{H}^s(\Omega))$, we can conclude that the approximate equations \eqref{second-approximate-1}--\eqref{second-approximate-5} for $\bm{u}^{n_k}$ converges to second-grade equations \eqref{second-grade-rewriten-1}--\eqref{second-grade-rewriten-5} for $\bm{u}$ and the identify $\eqref{energy-formula}$ holds.  Since $\Omega^{\prime}$ is arbitrary, it's readily checked by contradiction that $\bm{u} \in C([0, T]; \bm{H}^1(\Omega))$.  Furthermore, we observe that $\bm{u}^{n}$ is uniformly bounded by \eqref{ub-u}, it follows that \eqref{theorem-bound} holds. The proof of Theorem \ref{theorem-1} is completed.

\end{proof}

\section*{Comments and conclusions}
The main motivaton of our recent work is to consider the vanishing viscosity limit problem of Navier-Stokes equations. As we know, in a domain with Dirichlet boundary conditions, this problem remains wide open, something naturally associated with the physical phenomena of turbulence and of boundary layers. Marsden, Ebin, and Fischer \cite{1972Diffeomorphism} even suspected that although in a region with boundary, solutions of the Navier-Stokes equations would not in general converge to the solutions of Euler equations, a certain averaged quantity of the flow may converge. Recently, their conjecture is partly verified. In fact, for 2D bounded domain with Dirichlet boundary conditions, the authors \cite{lopes2015approximation} have showed that second-grade equations converges to Euler equations as the viscosity and $\alpha$ tends to zero. With our main therem about well-posedness of second-grade fuid equations established, one would like to extend the singular limit problem to the case in exterior domain.

At last, let us consider some questions naturally associated with the work we have presented. First, in the main theorem, we assume the initial data $\bm{u}_0$ belongs to $\bm{H}^3(\Omega) \cap V$. In future, we would like to weaken the regularity of $\bm{u}_0$ to get a weaker solution of Euler-$\alpha$ equations in exterior domain. Second, we plan to consider second-grade fluid equations in three dimension, focusing on well-posedness and singular limit problems. Comparing with two dimensional case, the global well-posedness problem in three dimension is more challenging, since the vorticity equation includes a deformation term.

\section*{Acknowledgement}
The work of Aibin Zang was supported in part  by the National Natural Science Foundation of China (Grant no. 11771382, 12061080).
\vskip 0.2cm{}

{}

\normalem
\bibliographystyle{siam}
\bibliography{mybib}

\end{document}